\numberwithin{equation}{section}
\newtheorem{theorem}{Theorem}[section]
\newtheorem{proposition}[theorem]{Proposition}
\newtheorem{lemma}[theorem]{Lemma}
\newtheorem{remark}[theorem]{Remark}
\newtheorem{example}[theorem]{Example}
\newtheorem{question}[theorem]{Question}
\numberwithin{equation}{section}
\begin{document}
\title[Mean ergodic composition operators]{Mean ergodic composition operators on $H^\infty(\mathbb{B}_n)$}
\author{Hamzeh Keshavarzi }

\maketitle
\begin{abstract}
 In this paper, we study (uniformly) mean ergodic composition operators on $H^\infty(\mathbb{B}_n)$.
Under some additional assumptions, it is shown that mean ergodic operators have norm convergent iterates in $H^\infty(\mathbb{B}_n)$, and that they are always uniformly mean ergodic.\\
\textbf{MSC (2010):} primary: 47B33, secondary: 32Axx; 47B38; 47A35.\\
\textbf{Keywords:} composition operators, mean ergodic operators, space of bounded holomorphic functions.
\end{abstract}

\section{Introduction}

Let $X$ be a Banach space.
For an operator $T$ on $X$, the iterates of $T$ are defined by $T^n:= T\circ \stackrel{(n)}{...} \circ T$. If $\sup_{n\in \mathbb{N}} \|T^n\|<\infty$, then $T$ is called power bounded. The Cesàro mean of the sequence formed by the iterates of $T$ is defined as:
$$M_j(T)=\dfrac{1}{j} \sum_{i=1}^j T^i.$$
We say that $T$ is mean ergodic if $M_j(T)$ converges to a bounded operator acting on $X$ for the strong operator topology. In addition, $T$ is called uniformly mean ergodic if $M_j(T)$ converges in the operator norm.

Composition operators on various spaces of holomorphic functions have been
studied in recent years. See the books by Cowen and MacCluer \cite{Cowen2}
and Shapiro \cite{Shapiro} for discussions of composition operators on classical spaces of holomorphic
functions. Several authors have studied the dynamical properties of composition operators \cite{arendt1, arendt2, beltran, bonet1, jorda, keshavarzi1, keshavarzi2}. In this paper, we focus on the
(uniform) mean ergodicity of these operators.

The study of mean ergodicity in the space of linear operators defined on a Banach space
goes back to von Neumann. In $1931$, he proved that if $H$ is a Hilbert space and $T$ is a unitary operator on $H$, then $T$ is mean ergodic. For a Grothendieck Banach space $X$ with Dunford-Pettis property (GDP space), Lotz \cite{Lotz} proved that $T\in L(X)$ which satisfies $\|T^n/n\|\rightarrow 0$ is mean ergodic if and only if it is uniformly mean ergodic. For the definition of GDP spaces see Lotz’s paper \cite[pages 208-209]{Lotz}.

Beltr\'{a}n-Meneua et. al \cite{beltran} characterized the (uniform) mean ergodicity of composition operators on $H^\infty(\mathbb{D})$. Since $H^\infty(\mathbb{D})$ is a GDP space, a composition operator acting on this space is mean ergodic if and only if it is uniformly mean ergodic. However, we do not know whether $H^\infty(\mathbb{B}_n)$ is a GDP space or not. Nevertheless, we will show that the mean ergodicity and the uniform mean ergodicity of composition operators on $H^\infty(\mathbb{B}_n)$ are equivalent in some cases (see Theorems \ref{t1}, \ref{t2} and Example \ref{ex1}).

If $X$ is a Banach space, the space of continuous linear operators
from $X$ to itself is denoted by $\mathcal{L}(X)$.
Furthermore, if $\mathcal{K}(X)\subseteq \mathcal{L}(X)$ denotes the set of compact operators on $X$, then the essential norm
$ \|T\|_e := \inf \{ \|T-K\| : \ K \in \mathcal{K}(X)\}$ indeed defines a norm on the
Calkin algebra $\mathcal{L}(X)/ \mathcal{K}(X)$.
An operator $T\in \mathcal{L}(X)$ is called quasi-compact if there exists an $n_0\in \mathbb{N}$ such that $ \|T^{n_0}\|_e<1$.

Let $\varphi$ be a holomorphic self-map of $\mathbb{D}$ with interior Denjoy-Wolff point $a\in \mathbb{D}$. In \cite{beltran} and \cite{jorda}, it has been shown that the following statements are equivalent:
\begin{itemize}
\item[(i)] $C_\varphi$ is mean ergodic on $H^\infty(\mathbb{D})$.
\item[(ii)] $C_\varphi$ is uniformly mean ergodic on $H^\infty(\mathbb{D})$.
\item[(iii)] $\| \varphi_j-a \|_\infty \rightarrow 0$, as $j\rightarrow \infty$.
\item[(iv)] $C_\varphi$ is quasi-compact on $H^\infty(\mathbb{D})$.
\end{itemize}
 Indeed, in the one-variable case, we have $\varphi_j\rightarrow a$, uniformly on the compact subsets of $\mathbb{D}$, that is, the holomorphic retraction associated with $\varphi$ is $\rho\equiv a$. In Theorem \ref{t2}, we show that by the assumption $\rho\equiv a$ in several variables, we have the same result. However, in section $4$, we present an example which shows that this theorem is practically different from the one-variable case.

The notation $A\lesssim B$ on a set
$S$ means that, independent of any $a\in S$, there exists some positive constant $C$ such that $A(a)\leq CB(a)$. Moreover,
we use the notation
$A\simeq B$ on $S$ to indicate that there are some
positive constants $C$ and $D$ such that $CB(a) \leq A(a)\leq DB(a)$ for each $a\in S$.

\section{Basic results}

If $X$ is a taut manifold, consider $Hol(X, X)$ as the set of all holomorphic functions $f: X\rightarrow X$ and let $Aut(X)$ be the set of all automorphisms of $X$. Let $\mathbb{C}$ be the complex plane.
Then, the unit ball in $\mathbb{C}^n$ is defined as
$\mathbb{B}_n=\{z\in \mathbb{C}^n: \ |z|<1 \}$ and the unit disk in $\mathbb{C}$ is denoted by $\mathbb{D}=\mathbb{B}_1$. It is well-known that $\mathbb{B}_n$ is a taut manifold. Let $H(\mathbb{B}_n)$ be the space of all holomorphic functions from $\mathbb{B}_n$ into $\mathbb{C}$ and let  $H^\infty(\mathbb{B}_n)$ be the subspace of all bounded functions in $H(\mathbb{B}_n)$.
If $\varphi$ is a holomorphic self-map of $\mathbb{B}_n$, then the composition operator $C_\varphi$ on $H(\mathbb{B}_n)$ induced by $\varphi$ is defined as $C_\varphi f=f\circ \varphi$.

An automorphism $\varphi$ of $\mathbb{B}_n$ is a unitary matrix of $\mathbb{C}^n$ if and only if $\varphi(0)=0$. Another class of automorphisms consists of symmetries of $\mathbb{B}_n$ which are also called involutive automorphisms or involutions. Thus, for any point $a\in \mathbb{B}_n\setminus \{0\}$, we define:
$$\varphi_a(z)=\dfrac{a-P_a(z)-s_aQ_a(z)}{1-\langle z,a\rangle}, \qquad z\in \mathbb{B}_n,$$
where $s_a=\sqrt{1-|a|^2}$, $P_a$ is the orthogonal projection from $\mathbb{C}^n$ onto the one-dimensional subspace $\langle a \rangle$ generated by $a$ and $Q_a$ is the orthogonal projection from $\mathbb{C}^n$ onto  $\mathbb{C}^n\ominus \langle a \rangle$. Clearly, $\varphi_a(0)=a$, $\varphi_a(a)=0$, and $\varphi_a \circ \varphi_a(z)=z$.
Every automorphism $\varphi$ of $\mathbb{B}_n$ has the following form:
$$\varphi = U\varphi_a= \varphi_b V,$$
where $U$ and $V$ are unitary transformations of $\mathbb{C}^n$ and $\varphi_a$ and $\varphi_b$ are involutions.

Let $\beta: \mathbb{B}_n \times \mathbb{B}_n \rightarrow [0,\infty)$ be the Bergman metric. We can formulate this metric as follows:
\begin{equation} \label{e0}
\beta(z,w)= \dfrac{1}{2} \log \dfrac{1+|\varphi_z(w)|}{1-|\varphi_z(w)|}, \qquad z,w\in \mathbb{B}_n.
\end{equation}
$\beta$ is invariant under automorphisms, that is,
$$\beta(\varphi(z),\varphi(w))=\beta(z,w),$$
for all $z,w\in \mathbb{B}_n$ and $\varphi\in Aut(\mathbb{B}_n)$. Moreover, if $\varphi:\mathbb{B}_n\rightarrow\mathbb{B}_n$ is just a holomorphic function, then from \cite[Proposition 2.3.1]{abate}, we have
\begin{equation}\label{e5}
\beta(\varphi(z),\varphi(w))\leq\beta(z,w), \qquad \forall z,w\in\mathbb{B}_n.
\end{equation}
 If $r>0$ and $z\in \mathbb{B}_n$, then we shall denote by $B(z,r)$ the Bergman ball centered at $z$ with radius $r$ and define it in the following way:
$$B(z,r)=\{w\in \mathbb{B}_n: \ \beta(z,w)<r\}.$$
If we define $\nu$ as the Lebesgue measure on $\mathbb{C}^n$, then for every $r>0$ we have
\begin{equation} \label{e1}
 (1-|z|^2)^{n+1} \simeq (1-|w|^2)^{n+1}\simeq \nu(B(z,r))\simeq \nu(B(w,r)),
\end{equation}
 and
\begin{equation} \label{e2}
 |1-\langle z,w\rangle|\simeq (1-|z|^2),
\end{equation}
for all $w\in \mathbb{B}_n$ and $z\in B(w,r)$.
For the proof of the above inequalities refer to Zhu’s book \cite[Lemmas 1.24 and 2.20]{zhu}.

 Now, we give some basic results from \cite{abate}. Let $X$ be a taut manifold, $z\in X$ and $\varphi \in Hol(X,X)$. We define  $d_z \varphi =\varphi^\prime(z)$ and $ sp \ d_z \varphi$ as the spectrum of the matrix $d_z \varphi$.
Let $T_z X$ be the tangent space at $X$ in $z$. If $z\in \mathbb{B}_n$, then it is well-known that $\mathbb{C}^n\cong T_z \mathbb{B}_n$.

\begin{theorem} \cite[Theorem 2.1.21]{abate} \label{t01}
Let $X$ be a relatively taut manifold fulfilling the identity principle and
take $\varphi \in Hol(X,X)$ with a fixed point $z_0 \in X$. Then:
\begin{itemize}
\item[(i)] the spectrum of $d_{z_0}\varphi$ is contained in $\overline{\mathbb{D}}$;
\item[(ii)] $|\det d_{z_0}\varphi | \leq 1$;
\item[(iii)] $d_{z_0}\varphi=id$ if and only if $\varphi$ is the identity;
\item[(iv)] $T_{z_0}X$ admits a $d_{z_0}\varphi $-invariant splitting $T_{z_0}X= L_N \oplus L_U$ such that the spectrum
of $d_{z_0}\varphi|_{L_N}$ is contained in $\mathbb{D}$, the spectrum of $d_{z_0}\varphi|_{L_U}$ is contained in $\partial \mathbb{D}$, and $d_{z_0}\varphi|_{L_U}$ is diagonalizable;
\item[(v)]  $| \det d_{z_0}\varphi| = 1$ if and only if $\varphi$ is an automorphism.
\end{itemize}
\end{theorem}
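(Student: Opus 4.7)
The plan rests on a single mechanism: because $X$ is relatively taut and $z_0$ is a common fixed point of the iterates $\varphi^k$, the family $\{\varphi^k\}_{k\geq 1}$ is relatively compact in $Hol(X,X)$ with the compact-open topology; in particular it is locally uniformly bounded, and Cauchy-type estimates transfer this to a uniform bound on each derivative $d_{z_0}\varphi^k = A^k$, where I write $A := d_{z_0}\varphi$. This uniform bound on $\{A^k\}$ is the workhorse behind (i), (ii) and (iv).

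For (i), if $\lambda$ were an eigenvalue of $A$ with $|\lambda|>1$, then $\lambda^k$ would be an eigenvalue of $A^k$, forcing $\|A^k\|\to\infty$ and contradicting the bound above. Part (ii) is then immediate from $|\det A| = \prod|\lambda_i|\le 1$. Part (iii) is Cartan's classical argument, using the identity principle. Choose local holomorphic coordinates with $z_0 = 0$ and suppose $A = I$ but $\varphi\ne id$. Expand $\varphi(z) - z = P_m(z) + O(|z|^{m+1})$ with $P_m$ the first nonzero homogeneous term of degree $m\ge 2$. An induction on $j$ gives $\varphi^j(z) - z = jP_m(z) + O(|z|^{m+1})$, so the $m$-th order Taylor coefficients of $\varphi^j$ at $0$ grow linearly in $j$; but the normality of $\{\varphi^j\}$ together with Cauchy's formula on a small polydisc around $0$ forces these coefficients to remain bounded, a contradiction.

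For (iv) I would pass to the Jordan decomposition of $A$, letting $L_N$ be the sum of generalized eigenspaces with eigenvalues in $\mathbb{D}$ and $L_U$ the sum of the remaining (unimodular) ones. The invariance of the splitting is automatic. If $A|_{L_U}$ had a nontrivial Jordan block for some eigenvalue $\lambda$ with $|\lambda|=1$, then $A^k$ restricted to that block would contain entries of size $|k\lambda^{k-1}|\to\infty$, again contradicting the uniform bound, so $A|_{L_U}$ must be diagonalizable. For (v), if $|\det A| = 1$ then each $|\lambda_i|=1$, so $L_N=\{0\}$ and $A$ is diagonalizable with spectrum on $\partial\mathbb{D}$. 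Hence the closure $\overline{\{A^k\}}$ is a compact abelian subgroup of $GL(T_{z_0}X)$, and some subsequence $A^{k_\ell}\to I$. Tautness then yields a further subsequence along which $\varphi^{k_\ell}$ converges in $Hol(X,X)$ to some $\psi$ with $\psi(z_0)=z_0$ and $d_{z_0}\psi = I$, hence $\psi = id$ by (iii). A further diagonal subsequence of $\varphi^{k_\ell-1}$ produces a holomorphic $\psi'$ with $\varphi\circ\psi' = \psi' \circ \varphi = id$, so $\varphi\in Aut(X)$. Conversely, if $\varphi\in Aut(X)$, applying (i) to both $\varphi$ and $\varphi^{-1}$ forces every eigenvalue of $A$ to have modulus one, so $|\det A|=1$.

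The main obstacle will be justifying these compactness and estimate arguments on a general relatively taut manifold rather than on a bounded domain, where Cauchy estimates are immediate. In particular one has to verify that the compact-open limits of subsequences of $\{\varphi^k\}$ really fix $z_0$ (instead of escaping from every compact set, which is exactly what relative tautness is designed to preclude) and that Cauchy-type estimates on $d_{z_0}\varphi^k$ can be extracted from locally uniform bounds on the iterates themselves.
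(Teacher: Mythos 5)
This theorem is quoted from Abate's monograph (Theorem 2.1.21 there); the paper itself gives no proof of it, so there is no internal argument to compare against. Your outline is essentially the standard proof given in the cited source: the fixed point prevents $\{\varphi^k\}$ from being compactly divergent, so tautness makes the iterates a relatively compact family, and Cauchy estimates in a chart bound $(d_{z_0}\varphi)^k$, which yields (i), (ii) and the diagonalizability in (iv); (iii) is Cartan's uniqueness argument via the linear growth of the first nonlinear homogeneous term; and (v) follows by extracting $A^{k_\ell}\to I$, identifying the limit of $\varphi^{k_\ell}$ with the identity through (iii), producing an inverse as a limit of $\varphi^{k_\ell-1}$, and, conversely, applying (i) to both $\varphi$ and $\varphi^{-1}$. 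The points you flag yourself (working in local coordinates to get the derivative bounds, and checking that limits of the iterates fix $z_0$) are exactly the routine verifications done in Abate's book, so the proposal is correct in substance and follows the same route as the cited proof.
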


Let $X$ be a taut manifold.
A sequence of holomorphic maps $\{f_j\} \subset Hol(X,X)$
is compactly divergent if for every pair of compact sets $K_1,K_2 \subset X$, there
is $N \in \mathbb{N}$ such that $f_j(K_1) \cap K_2 = \emptyset$ for all $j\geq N$. If $X = \mathbb{B}_n$, we shall sometimes say
that $\{f_j\}$ diverges to infinity uniformly on compact sets.

  A holomorphic retraction
of $X$ (or with terminology borrowed from the semigroup theory, an idempotent of $Hol(X,X)$)
is a holomorphic map $\rho :X \rightarrow X$ such that $\rho^2 = \rho$. The image of a holomorphic retraction
is said to be a holomorphic retract of $X$. Consider $\varphi \in Hol(X,X)$ such that the sequence of its iterates $\{\varphi_{k}\}$ converges to a holomorphic
function $h:X \rightarrow X$. Then, the sequence $\{\varphi_{2k}\}$ tends to $h$ too. It follows that $h^2 = h$, that is, $h$ is a holomorphic retraction of $X$.

\begin{theorem} \cite[Theorem 2.1.29]{abate} \label{t02}
Let $X$ be a taut manifold and $\varphi \in Hol(X,X)$. Assume that the sequence
$\{\varphi_k\}$ of iterates of $\varphi$ is not compactly divergent. Then, there exist a unique submanifold $M$
of $X$ and a unique holomorphic retraction $\rho:X \rightarrow M$ such that every limit point $h \in Hol(X,X)$
of $\{\varphi_k\}$ is of the form $h = \gamma \circ \rho$,
where $\gamma$ is an automorphism of $M$. Moreover, even $\rho$ is a limit point of the sequence $\{\varphi_k\}$.
\end{theorem}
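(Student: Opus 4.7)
The plan is to exploit the tautness of $X$ to secure compactness of $\{\varphi_k\}$ in the compact-open topology, and then extract $\rho$ as an idempotent limit point by using the semigroup structure of the iterates. Since $X$ is taut and $\{\varphi_k\}$ is not compactly divergent, the taut dichotomy yields that $\{\varphi_k\}$ is relatively compact in $Hol(X,X)$. Let $\Gamma$ denote the closure of $\{\varphi_k : k\geq 1\}$. Because $\varphi_i\circ\varphi_j=\varphi_{i+j}$, and composition is separately continuous for the compact-open topology (local equicontinuity of bounded families of holomorphic maps, plus continuity of each individual map on compacts), $\Gamma$ is a compact semigroup under composition.

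Next I would produce an idempotent $\rho\in\Gamma$ that is a genuine limit point of $\{\varphi_k\}$. One can invoke Ellis's theorem (every compact right-topological semigroup contains an idempotent), or proceed concretely: take a limit $h=\lim_j\varphi_{n_j}$, consider the relatively compact orbit $\{h^k\}_{k\geq 1}$, and, by a diagonal extraction, choose an increasing sequence $k_\ell$ along which both $h^{k_\ell}$ and $h^{k_{\ell+1}-k_\ell}$ converge; the common limit $\rho:=\lim_\ell h^{k_\ell}$ then satisfies $\rho\circ\rho=\rho$. Set $M:=\rho(X)$. The identity $\rho|_M=\mathrm{id}_M$ together with standard results on holomorphic retractions of taut manifolds (cf.\ \cite{abate}) guarantee that $M$ is a closed complex submanifold of $X$.

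The main work is to verify that every limit point $h$ factorises as $h=\gamma\circ\rho$ with $\gamma\in Aut(M)$. First, I would establish $h(X)\subseteq M$, equivalently $\rho\circ h=h$: choosing indices $m_j, n_j$ so that $\varphi_{m_j}\to\rho$, $\varphi_{n_j}\to h$ and $\varphi_{m_j+n_j}\to h$ (possible because $\rho$ lies in the minimal closed ideal of $\Gamma$, which is absorbing on both sides), passing to the limit in $\varphi_{m_j+n_j}=\varphi_{m_j}\circ\varphi_{n_j}$ yields $h=\rho\circ h$. Consequently $\gamma:=h|_M$ maps $M$ into $M$. To see $\gamma\in Aut(M)$, I would use that the minimal ideal of $\Gamma$, being generated by the single element $\varphi$, is a compact topological group with identity $\rho$; hence $h$ has a two-sided inverse $h'\in\Gamma$ with $h\circ h'=h'\circ h=\rho$, and restriction to $M$ makes $h'|_M$ the inverse of $\gamma$ in $Aut(M)$.

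Uniqueness of $\rho$ (and hence of $M$) drops out from the factorisation: if $\rho_1,\rho_2$ were two such retractions, then $\rho_2=\gamma\circ\rho_1$ for some $\gamma\in Aut(\rho_1(X))$, and combining $\rho_2^2=\rho_2$ with $\rho_2|_{\rho_2(X)}=\mathrm{id}$ forces $\gamma=\mathrm{id}$. The step I expect to be most delicate is the identification of the minimal ideal of $\Gamma$ as a single compact topological group rather than a disjoint union of isomorphic groups; this is precisely where the monothetic nature of $\Gamma$ (it is the closed sub-semigroup generated by a single element) must be used, and where a hands-on subsequence argument would likely replace the abstract semigroup machinery if I wanted a self-contained write-up.
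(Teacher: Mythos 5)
The paper does not actually prove this statement: it is quoted from Abate's book \cite[Theorem 2.1.29]{abate}, and the only thing added is the remark that uniqueness of $M$ and $\rho$ can be read off from Abate's proof. Your proposal is essentially a reconstruction of that standard proof --- relative compactness of the iterates, the compact abelian semigroup generated by them, an idempotent $\rho$ serving as identity of the kernel, the factorisation $h=\gamma\circ\rho$, and the uniqueness argument (which, note, uses that $\rho_2$ is itself a limit point, i.e.\ the ``Moreover'' clause; that is the right reading of the uniqueness assertion and matches the paper's remark). So in spirit you are on the same route as the cited source.

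Two steps are glossed in a way that matters. First, ``not compactly divergent $\Rightarrow$ $\{\varphi_k\}$ relatively compact in $Hol(X,X)$'' is not what tautness gives you: tautness only says every sequence admits a subsequence which converges or is compactly divergent, so a priori the iterate sequence could contain a convergent subsequence and a compactly divergent one simultaneously. Excluding that uses the semigroup property of the iterates and is a separate lemma in Abate's book preceding Theorem 2.1.29; you need it (or at least compactness of the set of limit points) before $\Gamma$ can be treated as a compact semigroup. Second, your concrete idempotent construction does not work as written: a diagonal extraction lets you assume $h^{k_\ell}\to\rho$ and $h^{k_{\ell+1}-k_\ell}\to\sigma$, but this only yields $\sigma\circ\rho=\rho$, not $\sigma=\rho$; there is no way to force the ``common limit'' by naive extraction. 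The repair is precisely the abstract route you also mention (Numakura/Ellis, or a minimal closed subsemigroup argument), combined with commutativity of the limit set --- limits of commuting iterates commute, since composition is continuous on the compact closure --- which is what makes the kernel a single compact group with identity $\rho$ rather than a union of groups, the point you yourself flag as delicate. With these two points supplied, the remaining steps ($h=\rho\circ h=h\circ\rho=(h|_M)\circ\rho$ with $h|_M\in Aut(M)$ via the group inverse, $M=\rho(X)$ a submanifold by the standard retract theorem, and your uniqueness computation) are correct.
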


\begin{remark}
In \cite[Theorem 2.1.29]{abate} (the above theorem), it is not mentioned that $M$ and $\rho$ are unique. However, we can infer from its proof that they are unique.
\end{remark}

\begin{proposition} \cite[Proposition 2.2.14]{abate} \label{p03}
Let $\varphi \in Hol(\mathbb{B}_n,\mathbb{B}_n)$ be such that $\varphi(0) = 0$. Assume $\|\varphi(z_0)\| = \|z_0\|$ for some $z_0 \in \mathbb{B}_n$ with $z_0 \neq 0$ (or $\|d_0\varphi(v_0)\| = \|v_0\|$ for some $v_0 \in \mathbb{C}^n$ with $v_0\neq 0$).
Then, there is a linear subspace $V$ of $\mathbb{C}^n$ containing $z_0$ (or $v_0$) such that
$\varphi|_{V \cap \mathbb{B}_n} = d_0 \varphi|_{V \cap \mathbb{B}_n}$ is the restriction of a suitable unitary matrix.
\end{proposition}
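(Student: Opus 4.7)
My plan is to slice along complex lines through the origin and apply the one-variable Schwarz lemma at each step. The main tool is the Schwarz inequality on the ball, $\|\varphi(z)\|\leq\|z\|$ and $\|d_0\varphi(v)\|\leq\|v\|$, which follows from $\varphi(0)=0$ by the slicing argument itself. I will first show that the norm-equality hypothesis is subsumed in the derivative hypothesis, then construct the subspace $V$ from the derivative hypothesis using the parallelogram identity, and finally show that $\varphi$ agrees with $d_0\varphi$ on $V\cap\mathbb{B}_n$.

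\emph{Step 1: reduction.} Assume $\|\varphi(z_0)\|=\|z_0\|=:r$ with $z_0\neq 0$; put $u=z_0/r$ and $e=\varphi(z_0)/\|\varphi(z_0)\|$, and consider
$$h(\lambda)=\langle\varphi(\lambda u),e\rangle,\qquad \lambda\in\mathbb{D}.$$
Cauchy-Schwarz together with the slice Schwarz bound give $|h(\lambda)|\leq\|\varphi(\lambda u)\|\leq|\lambda|$; moreover $h(0)=0$ and $h(r)=\|\varphi(z_0)\|=r$. The one-variable Schwarz lemma therefore forces $h(\lambda)=\lambda$, and the chain $|\lambda|=|h(\lambda)|\leq\|\varphi(\lambda u)\|\leq|\lambda|$ saturates Cauchy-Schwarz, so $\varphi(\lambda u)=\lambda e$ for every $\lambda\in\mathbb{D}$. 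Differentiating at $\lambda=0$ yields $d_0\varphi(u)=e$ and $\|d_0\varphi(u)\|=1=\|u\|$, so this case reduces to the derivative case with $v_0=u$.

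\emph{Step 2: assembling $V$.} Assume $\|d_0\varphi(v_0)\|=\|v_0\|$, and set
$$V=\{\,v\in\mathbb{C}^n:\|d_0\varphi(v)\|=\|v\|\,\}.$$
For $v,w\in V$, the parallelogram identity applied on both sides gives
$$\|d_0\varphi(v+w)\|^2+\|d_0\varphi(v-w)\|^2=2\|d_0\varphi(v)\|^2+2\|d_0\varphi(w)\|^2=2\|v\|^2+2\|w\|^2=\|v+w\|^2+\|v-w\|^2,$$
and since each summand on the left is bounded by the corresponding summand on the right via $\|d_0\varphi\|\leq 1$, both bounds must be equalities; hence $v\pm w\in V$. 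Thus $V$ is a linear subspace and $d_0\varphi|_V$ is a norm-preserving linear map on a Hilbert space, which automatically preserves inner products; it therefore extends to a unitary matrix $U$ of $\mathbb{C}^n$, for example by sending $V^\perp$ unitarily onto $(d_0\varphi(V))^\perp$.

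\emph{Step 3: propagating linearity to $V\cap\mathbb{B}_n$.} For any unit vector $u\in V$, run the construction of Step 1 with $e:=d_0\varphi(u)$, which is itself a unit vector by the definition of $V$. The function $h(\lambda)=\langle\varphi(\lambda u),e\rangle$ satisfies $|h(\lambda)|\leq|\lambda|$, $h(0)=0$, and $h'(0)=\langle d_0\varphi(u),e\rangle=\|e\|^2=1$; Schwarz's lemma again gives $h(\lambda)=\lambda$, and equality in Cauchy-Schwarz forces $\varphi(\lambda u)=\lambda d_0\varphi(u)=d_0\varphi(\lambda u)$ for all $\lambda\in\mathbb{D}$. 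Since every point of $V\cap\mathbb{B}_n$ lies on such a line, $\varphi|_{V\cap\mathbb{B}_n}=d_0\varphi|_{V\cap\mathbb{B}_n}=U|_{V\cap\mathbb{B}_n}$. The only delicate point in the argument is the parallelogram step showing $V$ is a linear subspace; everything else is a routine slicing reduction to the one-variable Schwarz lemma.
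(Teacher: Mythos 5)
The paper does not prove this statement at all: it is quoted verbatim from Abate's book (Proposition 2.2.14), so there is no in-paper argument to compare against. Your proof is correct and self-contained, and it follows the classical route (as in Abate and in Rudin's Theorem 8.1.4): reduce the norm-equality hypothesis to the derivative case by the equality case of the one-variable Schwarz lemma on a slice, take $V$ to be the maximal isometry subspace of the contraction $d_0\varphi$ (your parallelogram argument correctly shows it is closed under sums, and closure under complex scalars is immediate by homogeneity, which is worth stating), and then propagate linearity along each complex line in $V$ via Schwarz equality at the derivative plus saturation of Cauchy--Schwarz. The only ingredients you assert rather than prove, namely the slice bound $\|\varphi(\lambda u)\|\leq|\lambda|$ and the extension of the linear isometry $d_0\varphi|_V$ to a unitary of $\mathbb{C}^n$, are standard and correctly invoked, so I see no genuine gap.
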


\begin{theorem} \cite[Theorem 2.2.32]{abate} \label{t04}
 Let $\varphi \in Hol(\mathbb{B}_n,\mathbb{B}_n)$ have a fixed point  $z_0 \in \mathbb{B}_n$. Then,
the sequence of iterates $\{\varphi_k\}$ converges if and only if $ sp \ d_{z_0} \varphi \subset \mathbb{D} \cup \{1\}$. Furthermore, the sequence $\{\varphi_k\}$ converges to $z_0$ if and only if $ sp \ d_{z_0} \varphi \subset \mathbb{D}$.
\end{theorem}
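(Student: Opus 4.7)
The plan is to reduce to $z_0=0$ by conjugating with an automorphism of $\mathbb{B}_n$, and then marry the linear spectral splitting of Theorem \ref{t01}(iv) at the fixed point with the retract structure from Theorem \ref{t02}. Because $\varphi_k(0)=0$ for every $k$, the iterates are not compactly divergent, so Theorem \ref{t02} produces a submanifold $M\ni 0$ and a holomorphic retraction $\rho:\mathbb{B}_n\to M$ (itself a limit point of $\{\varphi_k\}$) such that every limit point has the form $\gamma\circ\rho$ for some $\gamma\in\mathrm{Aut}(M)$. Tautness combined with non-divergence supplies relative compactness of $\{\varphi_k\}$ in the compact-open topology, so the whole sequence converges precisely when all subsequential limits coincide.

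For the two \emph{only if} directions, if $\varphi_k\to h$ in $\mathrm{Hol}(\mathbb{B}_n,\mathbb{B}_n)$, then the chain rule yields $(d_0\varphi)^k\to d_0 h$. Since $h$ is a holomorphic retraction, $d_0 h$ is a linear projection, and a matrix whose powers converge to a projection must have spectrum in $\mathbb{D}\cup\{1\}$; if in addition $h\equiv 0$, then $d_0 h=0$, forcing $sp\,d_0\varphi\subset\mathbb{D}$.

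For the substantive \emph{if} direction, assume $sp\,d_0\varphi\subset\mathbb{D}\cup\{1\}$ and use the splitting $T_0\mathbb{B}_n=L_N\oplus L_U$ of Theorem \ref{t01}(iv). On $L_U$ the spectrum lies in $\partial\mathbb{D}\cap(\mathbb{D}\cup\{1\})=\{1\}$ and $d_0\varphi|_{L_U}$ is diagonalizable, so $d_0\varphi|_{L_U}=\mathrm{id}$. Passing to a subsequence $\varphi_{k_j}\to\rho$, the chain rule gives $d_0\rho=\lim (d_0\varphi)^{k_j}$, which equals $\mathrm{id}$ on $L_U$ and $0$ on $L_N$ by the spectral radius bound there; since $d_0\rho$ is a projection with image $T_0 M$, this forces $T_0 M=L_U$. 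Next, $\varphi\circ\rho=\lim\varphi_{k_j+1}$ is itself a limit point, so Theorem \ref{t02} gives $\varphi\circ\rho=\tilde\gamma\circ\rho$ for some $\tilde\gamma\in\mathrm{Aut}(M)$; restricting to $M$ yields $\varphi|_M=\tilde\gamma$, an automorphism of $M$ fixing $0$ whose derivative is $d_0\varphi|_{T_0 M}=d_0\varphi|_{L_U}=\mathrm{id}$. Theorem \ref{t01}(iii) applied to $M$ then forces $\varphi|_M=\mathrm{id}_M$, and hence $\varphi_k|_M=\mathrm{id}_M$ for every $k$. For any subsequential limit $\gamma\circ\rho$, restriction to $M$ gives $\gamma=\lim\varphi_{k_j}|_M=\mathrm{id}_M$; every limit point equals $\rho$, and therefore $\varphi_k\to\rho$. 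Finally, if $sp\,d_0\varphi\subset\mathbb{D}$, then $L_U=\{0\}$, so $M=\{0\}$ and $\rho\equiv 0$, giving $\varphi_k\to z_0$.

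The step I expect to be the main obstacle is the identification $T_0 M=L_U$, since it is the only place where the linear hypothesis on $sp\,d_0\varphi$ is wedded to the nonlinear retract structure, and it requires computing $d_0\rho=\lim(d_0\varphi)^{k_j}$ on both summands of the splitting at once. Once this bridge is in place, the identity principle packaged as Theorem \ref{t01}(iii), applied on the submanifold $M$, together with the uniqueness of the retract in Theorem \ref{t02}, closes the argument cleanly.
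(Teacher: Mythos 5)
The paper does not actually prove this statement: Theorem \ref{t04} is quoted verbatim from Abate's book (Theorem 2.2.32 there), so there is no in-paper argument to compare against. Your proof is correct, and it is essentially a reconstruction of the standard argument behind Abate's result, using exactly the tools the paper itself imports: non-compact divergence from the fixed point, the limit-retraction Theorem \ref{t02}, the spectral splitting of Theorem \ref{t01}(iv), and Cartan-type uniqueness via Theorem \ref{t01}(iii). Your key bridge $T_0M=L_U$ is precisely the identification the paper later records (in the form $M=L_U\cap\mathbb{B}_n$) as Proposition \ref{p2}, quoting Abate's Lemma 2.2.13 and Proposition 2.2.33, so the ``main obstacle'' you single out is indeed the crux and your derivation of it, $d_0\rho=\lim(d_0\varphi)^{k_j}$ being the identity on $L_U$ and $0$ on $L_N$, is sound because the splitting is $d_0\varphi$-invariant and $d_0\varphi|_{L_U}=\mathrm{id}$ when the unimodular spectrum is $\{1\}$ and diagonalizable. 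Two small points you leave implicit but which are standard and easily supplied: relative compactness of $\{\varphi_k\}$ needs that no \emph{subsequence} is compactly divergent, which follows from $\varphi_k(0)=0$ (take $K_1=K_2=\{0\}$); and applying Theorem \ref{t01}(iii) on $M$ requires $M$ to be taut and connected, which holds because a holomorphic retract of a taut manifold is taut and $M=\rho(\mathbb{B}_n)$ is connected. With those remarks added, your argument is a complete proof of the cited theorem.
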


\begin{theorem} \cite[Theorem 2.2.31]{abate} \label{t05}
Let $\varphi \in Hol(\mathbb{B}_n,\mathbb{B}_n)$ not have fixed points in $\mathbb{B}_n$. Then, the sequence of iterates of $\varphi$ converges to a point of $\partial \mathbb{B}_n$ uniformly on the compact subsets of $\mathbb{B}_n$.
\end{theorem}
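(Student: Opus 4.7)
The plan is to establish the result in two main steps: first, show that the sequence of iterates $\{\varphi_k\}$ is compactly divergent in $Hol(\mathbb{B}_n,\mathbb{B}_n)$; second, show that this divergent orbit in fact converges to a single boundary point, uniformly on compact subsets.

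For the first step, I would argue by contradiction, invoking Theorem~\ref{t02}. If $\{\varphi_k\}$ were not compactly divergent, there would exist a submanifold $M\subset\mathbb{B}_n$ and a holomorphic retraction $\rho:\mathbb{B}_n\to M$, itself a limit point of $\{\varphi_k\}$, with every other limit point of the form $\gamma\circ\rho$ for some $\gamma\in Aut(M)$. Writing $\rho=\lim_{j}\varphi_{k_j}$, one has $\varphi\circ\rho=\lim_{j}\varphi_{k_j+1}=\tilde\gamma\circ\rho$ for some $\tilde\gamma\in Aut(M)$; restricting to $M$ gives $\varphi|_M=\tilde\gamma$, and since $\rho|_M=\mathrm{id}_M$ arises as $\lim_{j}\tilde\gamma^{k_j}$, the iterates of $\tilde\gamma$ admit a convergent subsequence. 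The classical description of holomorphic retracts of $\mathbb{B}_n$ identifies $M$ with a ball $\mathbb{B}_k$ for some $0\leq k\leq n$. If $k=0$, the single point of $M$ is a fixed point of $\varphi$, contradicting the hypothesis; if $k\geq 1$, then $\tilde\gamma$ must be elliptic (hyperbolic and parabolic automorphisms of $\mathbb{B}_k$ have compactly divergent iterates), so it fixes a point inside $M$, again giving a fixed point of $\varphi$.

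For the second step, the key input is Wolff's lemma in the ball (Hervé's extension): under the no-interior-fixed-point hypothesis there exists $\xi\in\partial\mathbb{B}_n$ such that every horosphere
\[
E(\xi,R):=\left\{z\in\mathbb{B}_n:\frac{|1-\langle z,\xi\rangle|^2}{1-|z|^2}<R\right\}
\]
is $\varphi$-invariant. I would prove this by the usual contraction trick: for each $r\in(0,1)$ the map $(1-r)\varphi$ sends $\mathbb{B}_n$ into a strictly smaller ball and hence has a fixed point $z_r\in\mathbb{B}_n$ satisfying $\varphi(z_r)=z_r/(1-r)$; one extracts a boundary limit $\xi$ of $\{z_r\}$ as $r\to 0^+$ (no interior limit can exist, else $\varphi$ would have a fixed point), and a Julia-type inequality then yields the claimed horosphere invariance. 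Granted this, fix $z\in\mathbb{B}_n$ and choose $R$ so that $z\in E(\xi,R)$. Then $\varphi_k(z)\in E(\xi,R)$ for all $k$; combined with Step~1, the orbit $\{\varphi_k(z)\}$ can accumulate only on $\overline{E(\xi,R)}\cap\partial\mathbb{B}_n=\{\xi\}$, hence $\varphi_k(z)\to\xi$. A Montel argument applied to the uniformly bounded family $\{\varphi_k\}$ upgrades pointwise convergence to uniform convergence on compact subsets.

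The main obstacle is Step~1, specifically the descent that produces an interior fixed point: the risk of circular reasoning is real, since appealing to Denjoy-Wolff for $\mathbb{B}_k$ would presume what we are trying to prove. The cleanest way around this is to handle the automorphism case independently: an automorphism of $\mathbb{B}_k$ extends continuously to $\overline{\mathbb{B}_k}$, so Brouwer's theorem guarantees a fixed point there, and a direct Möbius-type analysis separates the elliptic case (interior fixed point, convergent iterates) from the hyperbolic and parabolic cases (boundary fixed point, iterates compactly divergent). With that trichotomy in hand, the argument above closes without circularity, and Step~2 then runs purely on Wolff's lemma plus normal-family compactness.
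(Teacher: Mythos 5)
This statement is quoted from Abate's book (Theorem 2.2.31) and the paper gives no proof of its own, so the only comparison is with the cited source: your two-step argument (compact divergence via the limit-retract theorem together with the facts that retracts of $\mathbb{B}_n$ are balls and that fixed-point-free automorphisms have compactly divergent iterates, then the Wolff point and horosphere invariance to pin down a single boundary limit) is essentially the classical Herv\'e--MacCluer/Abate proof. Your reconstruction is correct, including the observation that the automorphism trichotomy must be established independently to avoid circularity.
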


\section{Main results}

In this section, we study the (uniformly) mean ergodic composition operators on $H^\infty(\mathbb{B}_n)$. The equivalence of mean ergodicity and uniform mean ergodicity in some cases will be given. First, we study the case in which $\varphi$ has an interior fixed point. Note that if $\varphi$ has an interior fixed point $a\in \mathbb{B}$, then $\psi:=\varphi_a \circ \varphi\circ \varphi_a$ is a holomorphic self-map of $\mathbb{B}_n$ that fixes the origin. By applying the chain rule and the fact that $\varphi_a \circ \varphi_a=id$ we obtain the following equations:
$$d_0 \psi=(d_a \varphi_a) (d_a \varphi) (d_0 \varphi_a), \ \ and \ \ (d_0 \varphi_a) (d_a \varphi_a)=(d_a\varphi_a) (d_0\varphi_a)=id.$$
According to these equations, $d_a \varphi$ and $d_0\psi$ are similar matrices. This similarity implies that $sp \ d_a \varphi=sp \ d_0\psi$. Hence,  without loss of generality, we can assume that $\varphi(0)=0$.

\begin{lemma} \label{l2}
Let $\varphi$ be a holomorphic self-map of the unit ball with  a fixed point $a\in \mathbb{B}_n$. If $C_\varphi$ is mean ergodic on $H^\infty(\mathbb{B}_n)$, then
$$ sp \ d_a \varphi \subset \mathbb{D} \cup \{\lambda \in \partial \mathbb{D}, \ which \ \lambda \ is \ a \ root \ of \ 1 \}.$$
\end{lemma}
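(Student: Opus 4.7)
The plan is to restrict $\varphi$ to an invariant complex line on which it acts as an irrational rotation, transfer mean ergodicity down to $H^\infty(\mathbb{D})$, and then rule it out there by an essential-norm computation combined with a Lotz-style upgrade to uniform mean ergodicity.

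By the reduction in the paragraph preceding the lemma, I may assume $a=0$, so $A:=d_0\varphi$ has operator norm at most one by the Schwarz lemma. Suppose, toward a contradiction, that $\lambda\in \sigma(A)$ satisfies $|\lambda|=1$ and $\lambda^m\ne 1$ for every $m\in\mathbb{N}$. Theorem \ref{t01}(iv) places $\lambda$ in the diagonalizable unitary summand $A|_{L_U}$; pick a unit eigenvector $v$ with $Av=\lambda v$. Since $\|Av\|=\|v\|$, Proposition \ref{p03} (applied with $v_0=v$) furnishes a linear subspace $V\ni v$ such that $\varphi|_{V\cap\mathbb{B}_n} = A|_{V\cap\mathbb{B}_n}$ is the restriction of a unitary of $\mathbb{C}^n$. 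In particular, $\varphi(tv)=\lambda tv$ for $|t|<1$. After a unitary change of coordinates of $\mathbb{C}^n$---which induces an isometry of $H^\infty(\mathbb{B}_n)$ intertwining composition operators---assume $v=e_1$, so $\varphi(te_1)=\lambda te_1$.

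Define $R:H^\infty(\mathbb{B}_n)\to H^\infty(\mathbb{D})$ by $(Rf)(t)=f(te_1)$ and $S:H^\infty(\mathbb{D})\to H^\infty(\mathbb{B}_n)$ by $(Sg)(z)=g(z_1)$. Both have norm one and $RS=I_{H^\infty(\mathbb{D})}$, so $R$ is a metric surjection. The identity $\varphi(te_1)=\lambda te_1$ gives $R\,C_\varphi = C_{\tilde\varphi}\,R$ with $\tilde\varphi(t)=\lambda t$, whence $R\,M_j(C_\varphi)=M_j(C_{\tilde\varphi})\,R$. Consequently, if $C_\varphi$ is mean ergodic on $H^\infty(\mathbb{B}_n)$, then for every $g\in H^\infty(\mathbb{D})$ the sequence $M_j(C_{\tilde\varphi})g = R\,M_j(C_\varphi)(Sg)$ converges in $\|\cdot\|_\infty$, so $C_{\tilde\varphi}$ is mean ergodic on $H^\infty(\mathbb{D})$.

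To rule this out, note that $H^\infty(\mathbb{D})$ is a GDP space (Bourgain) and $\|C_{\tilde\varphi}^n/n\|=1/n\to 0$, so the Lotz theorem cited in the introduction upgrades mean ergodicity to uniform mean ergodicity. The mean-ergodic projection $P$ projects onto $\ker(I-C_{\tilde\varphi})$, which equals $\mathbb{C}$: the relation $f(\lambda t)=f(t)$ and $\lambda^k\ne 1$ for $k\ge 1$ kill all non-constant Taylor coefficients. Hence $P$ has rank one, $\|P\|_e=0$, and so $\|M_j(C_{\tilde\varphi})\|_e\to 0$. On the other hand, Dirichlet's approximation theorem yields a sequence $n_k\to\infty$ with $\lambda^{n_k}\to 1$; the monomials $t^{n_k}$ are bounded and weakly null in $H^\infty(\mathbb{D})$ (Riemann--Lebesgue together with the Grothendieck property), and for each fixed $j$
\[
\bigl\|M_j(C_{\tilde\varphi})\,t^{n_k}\bigr\|_\infty = \Bigl|\tfrac{1}{j}\sum_{i=1}^{j}\lambda^{i n_k}\Bigr|\ \xrightarrow[k\to\infty]{}\ 1,
\]
which forces $\|M_j(C_{\tilde\varphi})\|_e\ge 1$ and contradicts the previous conclusion.

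The principal obstacle is this last step: there is no clean off-the-shelf appeal to ``irrational rotations are not mean ergodic on $H^\infty(\mathbb{D})$'', since the cited one-variable equivalence is phrased under a Denjoy--Wolff hypothesis that is delicate for elliptic automorphisms. One therefore combines the arithmetic ingredient (Dirichlet approximation producing the weakly null sequence) with the Grothendieck--Dunford--Pettis property of $H^\infty(\mathbb{D})$ via Lotz; the earlier steps are geometric bookkeeping provided by Theorem \ref{t01}(iv) and Proposition \ref{p03}.
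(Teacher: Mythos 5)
Your geometric reduction is essentially the paper's: after normalizing $a=0$, you take a unit eigenvector $v$ for a unimodular eigenvalue $\lambda$ that is not a root of unity, use Proposition \ref{p03} to see that $\varphi$ acts on the slice $\langle v\rangle\cap\mathbb{B}_n$ as the rotation $t\mapsto\lambda t$, and transfer mean ergodicity to $H^\infty(\mathbb{D})$ via the restriction/extension pair $R,S$ (your explicit $RS=I$, $RC_\varphi=C_{\tilde\varphi}R$ intertwining is in fact more careful than the paper's one-line assertion that the restricted operator is mean ergodic). At that point the paper simply quotes \cite[Theorem 2.2]{beltran} to conclude that $C_{\tilde\varphi}$, $\tilde\varphi(t)=\lambda t$, is not mean ergodic on $H^\infty(\mathbb{D})$; you instead attempt to reprove this one-variable fact, and that is where your argument has a genuine gap.

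The flaw is the claim that the monomials $t^{n_k}$ are weakly null in $H^\infty(\mathbb{D})$. They are not: take $r_m\uparrow 1$ and a free ultrafilter $\mathcal{U}$, and set $\phi(f)=\lim_{\mathcal{U}}f(r_m)$; this is a norm-one functional with $\phi(t^n)=1$ for every $n$, so no subsequence of the monomials is weakly null. Riemann--Lebesgue only yields weak-star convergence of $t^n$ when $H^\infty$ is viewed as the dual of $L^1/H^1_0$, and the Grothendieck property you invoke concerns weak-star null sequences in the dual $(H^\infty)^*$, not sequences in $H^\infty$ itself; upgrading weak* to weak here would require the separable, non-reflexive predual to be Grothendieck, which fails. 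Hence the inequality $\|M_j(C_{\tilde\varphi})\|_e\geq 1$ is not justified as written. The step is repairable without essential norms or compact perturbations: since $M_j(C_{\tilde\varphi})t^n=\bigl(\tfrac1j\sum_{i=1}^j\lambda^{in}\bigr)t^n\to 0$ in norm for each fixed $n\geq 1$, the mean ergodic limit $P$ annihilates all monomials of positive degree, and therefore
\[
\|M_j(C_{\tilde\varphi})-P\|\ \geq\ \sup_{n\geq 1}\Bigl|\tfrac1j\sum_{i=1}^j\lambda^{in}\Bigr|=1,
\]
because $\lambda^n$ comes arbitrarily close to $1$ when $\lambda$ is not a root of unity; this contradicts the uniform mean ergodicity supplied by Lotz's theorem. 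Alternatively, simply cite \cite[Theorem 2.2]{beltran}, as the paper does.
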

\begin{proof}
According to the discussion before the lemma, without loss of generality, we can assume that $a=0$.
Let $\lambda \in  sp \ d_0 \varphi$. It is also assumed that it is not a root of $1$. If $v$ is the eigenvector of $d_0\varphi$ related to $\lambda$, then $sp \ d_0(\varphi\mid_{\langle v\rangle\cap \mathbb{B}_n})=\{\lambda\}$,  where $\langle v\rangle$ is the subspace spanned by $v$. Thus, by Theorem \ref{t01}, $\varphi\vert_{\langle v\rangle\cap \mathbb{B}_n } $ is an automorphism of $\langle v\rangle \cap \mathbb{B}_n$.
Since $\varphi(0)=0$, we can let $\varphi\vert_{\langle v\rangle\cap \mathbb{B}_n}:\mathbb{D} \rightarrow \mathbb{D}$ be the elliptic disk automorphism $\varphi\vert_{\langle v\rangle\cap \mathbb{B}_n}(z)=\lambda z$. Moreover, according to the assumption, $C_{ \varphi\vert_{\langle v\rangle \cap \mathbb{B}_n}}$ is mean ergodic on $H^\infty(\mathbb{D})$. This contradicts \cite[Theorem 2.2]{beltran}.
\end{proof}

The following theorem shows that when the index function is an automorphism of the unit ball with an interior fixed point, then the induced composition operator is mean ergodic if and only if it is uniformly mean ergodic on $H^\infty(\mathbb{B}_n)$.

\begin{theorem} \label{t1}
Let $\varphi$ be an automorphism of the unit ball with  a fixed point $a\in \mathbb{B}_n$. Then, the following statements are equivalent.
\begin{itemize}
\item[(i)]  $C_\varphi$ is mean ergodic on $H^\infty(\mathbb{B}_n)$.
\item[(ii)]  $C_\varphi$ is uniformly mean ergodic on $H^\infty(\mathbb{B}_n)$.
\item[(iii)] $ sp \ d_a \varphi \subset \{\lambda \in \partial \mathbb{D}, \ \lambda \ is \ a \ root \ of \ 1 \}.$
\end{itemize}
\end{theorem}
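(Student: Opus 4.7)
The plan is to chase the implications (ii) $\Rightarrow$ (i) $\Rightarrow$ (iii) $\Rightarrow$ (ii), after first reducing to the case $a = 0$. The reduction follows the discussion preceding Lemma \ref{l2}: conjugating by the involution $\varphi_a$, the map $\psi := \varphi_a \circ \varphi \circ \varphi_a$ fixes the origin, and the identity $C_\psi = C_{\varphi_a} C_\varphi C_{\varphi_a}$ together with $C_{\varphi_a}^2 = I$ shows that $C_\varphi$ and $C_\psi$ are similar as operators on $H^\infty(\mathbb{B}_n)$. Similarity transports both mean ergodicity and uniform mean ergodicity, and the spectra $sp\ d_a\varphi$ and $sp\ d_0\psi$ coincide. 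Since $\varphi$ is an automorphism, so is $\psi$, and an automorphism of $\mathbb{B}_n$ fixing $0$ is exactly a unitary matrix $U$ on $\mathbb{C}^n$; in particular $d_0\psi = U$ and $sp\ U \subset \partial\mathbb{D}$ from the outset.

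Once this reduction is in place, (ii) $\Rightarrow$ (i) is immediate from the definitions. For (i) $\Rightarrow$ (iii), I would apply Lemma \ref{l2} to $\psi$: its conclusion gives $sp\ U \subset \mathbb{D} \cup \{\lambda \in \partial\mathbb{D} : \lambda \text{ is a root of } 1\}$, and intersecting with $\partial\mathbb{D}$, where $sp\ U$ is already known to live, leaves only the roots of unity, which is exactly (iii).

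For (iii) $\Rightarrow$ (ii), the key observation is that $U$ is unitary, hence diagonalizable, so if every eigenvalue $\lambda_k$ satisfies $\lambda_k^{N_k} = 1$, then $U^N = I$ for $N := \operatorname{lcm}(N_1,\ldots,N_n)$. Unwinding $\psi^N = \varphi_a \circ \varphi^N \circ \varphi_a$ then forces $\varphi^N = id$, so that $T := C_\varphi$ satisfies $T^N = I$ and the sequence of iterates is $N$-periodic. Writing $j = qN + r$ with $0 \le r < N$, one obtains
$$ M_j(T) = \frac{qN}{j} \cdot \frac{1}{N}\sum_{\ell=1}^{N} T^\ell \ + \ \frac{1}{j}\sum_{\ell=1}^{r} T^\ell, $$
and since $qN/j \to 1$ while the remainder is a sum of fewer than $N$ operators of uniformly bounded norm divided by $j \to \infty$, we get norm convergence $M_j(T) \to \frac{1}{N}\sum_{\ell=1}^{N} T^\ell$, which is (ii).

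I do not expect a serious obstacle here, as the argument is essentially driven by Lemma \ref{l2} together with the classical identification of automorphisms fixing $0$ with unitary matrices. The only bookkeeping that deserves care is confirming that the similarity $C_\varphi \leftrightarrow C_\psi$ genuinely transports both flavors of mean ergodicity, and that $\psi^N = id$ really implies $\varphi^N = id$; both follow at once from $\varphi_a \circ \varphi_a = id$.
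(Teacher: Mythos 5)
Your proposal is correct and follows essentially the same route as the paper: reduce to a fixed point at the origin, use Lemma \ref{l2} plus the fact that an automorphism fixing $0$ is unitary (the paper phrases this via Theorem \ref{t01}(v)) to get (i) $\Rightarrow$ (iii), and for (iii) $\Rightarrow$ (ii) use diagonalizability of the unitary to conclude some iterate of $\varphi$ is the identity, so the Cesàro means converge in norm to the finite average of the iterates. Your explicit bookkeeping of $M_j(T)$ for a periodic $T$ just fills in the convergence the paper states without detail.
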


\begin{proof}
Again, without loss of generality, we assume that $a=0$.

(iii)$\Rightarrow$ (ii): Clearly, there is a positive integer $k$ such that $ sp \ d_0 \varphi_k =\{ 1 \}$. Since $\varphi$ is an automorphism and $\varphi(0)=0$, it is a unitary matrix. On the other hand, unitary matrices are diagonalizable. Thus, one can see that $\varphi_k=id$. Therefore,
$$M_j(C_\varphi )\longrightarrow \frac{1}{k}\sum_{i=0}^{k-1} C_{\varphi_i}.$$

(ii) $\Rightarrow$ (i) is obvious and (i) $\Rightarrow$ (iii) is a straightforward consequence of Theorem \ref{t01} (v) and Lemma \ref{l2}.
\end{proof}

Now, we are going to present the main result of this paper (Theorem \ref{t2}) for the interior fixed-point case. First, we need to give some auxiliary results.

\begin{lemma} \label{l3}
Let $\varphi$ be a holomorphic self-map of the unit ball with a fixed point in $\mathbb{B}$, $\rho$ be the holomorphic retraction associated with $\varphi$,  and $C_\varphi$ be mean ergodic. Then, there is a positive integer $k$ such that
$$\lim_{j\rightarrow \infty} \varphi_{kj}=\rho,$$
and $ \{\rho\circ \varphi_i, \ 0\leq i\leq k-1\}$ is the set of limit points of $\{\varphi_j\}$.
Moreover, $\rho\circ\varphi_{k+l}=\rho\circ \varphi_l$ for every positive integer $l$ and
$$\lim_{j\rightarrow\infty} M_j(C_\varphi )= \frac{1}{k}\sum_{i=0}^{k-1} C_{\rho\circ \varphi_i}.$$
\end{lemma}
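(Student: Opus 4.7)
The plan is to use Lemma \ref{l2} to produce an integer $k$ for which $\{\varphi_{kj}\}_j$ converges to the retraction $\rho$, then decompose the iterates by residue modulo $k$ to describe all limit points, and finally pass from the pointwise Cesàro limit to the $H^\infty$-norm limit provided by mean ergodicity. After the customary reduction to $a=0$, Lemma \ref{l2} places $sp\ d_0\varphi$ in $\mathbb{D}$ together with some roots of unity on $\partial\mathbb{D}$. Theorem \ref{t01}(iv) provides a splitting $T_0\mathbb{B}_n = L_N \oplus L_U$ with $d_0\varphi|_{L_U}$ diagonalizable on $\partial\mathbb{D}$; by Lemma \ref{l2} its eigenvalues are roots of unity, so if $k$ denotes the least common multiple of their orders, then $(d_0\varphi|_{L_U})^k = id$ and hence $sp\ d_0\varphi_k \subset \mathbb{D}\cup\{1\}$. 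Applying Theorem \ref{t04} to $\varphi_k$ (which still fixes $0$) gives that $\{\varphi_{kj}\}_j$ converges locally uniformly to some $h \in Hol(\mathbb{B}_n,\mathbb{B}_n)$.

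I would then identify $h$ with $\rho$. A short iteration argument gives $\varphi_{km}(h(z)) = \lim_n \varphi_{k(m+n)}(z) = h(z)$ for every $z$ and $m$, hence $h(h(z)) = \lim_m \varphi_{km}(h(z)) = h(z)$, so $h$ is a holomorphic retraction. Since $h$ is also a limit point of $\{\varphi_j\}$, Theorem \ref{t02} forces $h = \gamma\circ\rho$ for some $\gamma \in Aut(M)$; using $\rho|_M = id_M$, the identity $h^2 = h$ collapses to $\gamma^2 = \gamma$ on $M$, and therefore $\gamma = id_M$ and $h=\rho$. For each $0\le i\le k-1$, local uniform convergence yields $\varphi_{kj+i} = \varphi_{kj}\circ\varphi_i \to \rho\circ\varphi_i$, and because $\mathbb{N}$ is the disjoint union of $k$ residue classes modulo $k$, these exhaust the limit points of $\{\varphi_j\}$. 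Taking $i=k$ gives $\rho\circ\varphi_k = \lim_j \varphi_{k(j+1)} = \rho$, and composing with $\varphi_l$ yields $\rho\circ\varphi_{k+l} = \rho\circ\varphi_l$.

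Finally, mean ergodicity furnishes, for each $f \in H^\infty(\mathbb{B}_n)$, a limit $Lf \in H^\infty(\mathbb{B}_n)$ with $M_j(C_\varphi)f \to Lf$ in sup norm. Fixing $z\in \mathbb{B}_n$ and splitting by residues modulo $k$,
\[
\bigl(M_j(C_\varphi)f\bigr)(z) = \sum_{r=0}^{k-1} \frac{1}{j} \sum_{\substack{1 \le i \le j \\ i \equiv r \,(\mathrm{mod}\, k)}} f(\varphi_i(z)).
\]
For each $r$, $f(\varphi_{kq+r}(z)) \to f(\rho(\varphi_r(z)))$ as $q \to \infty$, and the inner sum has $\sim j/k$ terms, so a standard Cesàro argument shows that it contributes $\tfrac{1}{k} f(\rho(\varphi_r(z)))$ in the limit. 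Summing gives the pointwise limit $\tfrac{1}{k}\sum_{i=0}^{k-1} f(\rho(\varphi_i(z)))$, which must coincide with $(Lf)(z)$ because norm convergence implies pointwise convergence; this yields the stated formula. The one genuine subtlety is the identification $h=\rho$, which leans on the uniqueness in Theorem \ref{t02}; once it is in place, the residue-class decomposition and the Cesàro computation are essentially bookkeeping.
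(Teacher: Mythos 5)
Your proposal is correct and follows essentially the same route as the paper: Lemma \ref{l2} together with the spectral mapping argument produces $k$ with $sp\ d_a\varphi_k\subset\mathbb{D}\cup\{1\}$, Theorem \ref{t04} gives convergence of $\{\varphi_{kj}\}$ to some $h$, the idempotency of $h$ combined with Theorem \ref{t02} identifies $h=\rho$, and the residue-class decomposition yields the limit points and the relation $\rho\circ\varphi_{k+l}=\rho\circ\varphi_l$. Your explicit Cesàro computation identifying the strong-operator limit merely spells out a step the paper leaves implicit, so there is nothing to correct.
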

\begin{proof}
Let $a\in \mathbb{B}$ be the fixed point of $\varphi$. Then, according to Lemma \ref{l2} there is a positive integer $k$ such that $sp \ d_a \varphi_k \subset \mathbb{D} \cup \{1\}$. Thus, Theorem \ref{t04} implies that there is a holomorphic function $h:\mathbb{B}_n\rightarrow \mathbb{B}_n$ so that
$$\lim_{j\rightarrow \infty} \varphi_{kj}=h.$$

We will show that $h=\rho$: The above limit implies that $h$ is the holomorphic retraction associated with $\varphi_k$. Hence, $h^2=h$.
It follows from Theorem \ref{t02} that there is an automorphism $\gamma$ on $M$ so that $h=\gamma\circ \rho$.
Thus, $h$ is an automorphism on $M$ such that $h^2=h$, that is, it is the identity function on $M$. Therefore, $h=\rho$.

Hence,
$$\lim_{j\rightarrow \infty} \varphi_{kj+i}=\rho\circ \varphi_i, \qquad 1\leq i\leq k-1.$$
That is, $\{\rho\circ \varphi_i, \ 0\leq i\leq k-1\}$ is a  subset of the limit points of $\{\varphi_j\}$. One can see that all the limit points of $\{\varphi_j\}$ are of the form $\rho\circ \varphi_i$.
Moreover, for $l\geq 0$, we have
$$\rho\circ\varphi_{k+l}=\lim_{j\rightarrow \infty} \varphi_{kj+k+l}=\lim_{j\rightarrow \infty} \varphi_{(k+1)j+l}=\rho\circ \varphi_{l}.$$
This completes the proof.
\end{proof}

For the proof of the following lemma see \cite[page 66]{zhu}.
\begin{lemma}\label{l4}
Let $0<p<\infty$ and $0<R<r$. Then, there is a positive constant $C$ which depends only on $r$ and $R$ such that
$$|\nabla f (z)|\leq C \Big( \int_{B(0,r)} |f(w)|^p d\nu (w) \Big)^\frac{1}{p}, \qquad z\in B(0,R), \ \ f\in H(\mathbb{B}_n).$$
\end{lemma}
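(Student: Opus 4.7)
The plan is to combine Cauchy's estimates for holomorphic derivatives with the sub-mean-value property of $|f|^p$, exploiting that Bergman balls centered at the origin reduce to Euclidean balls. From \eqref{e0} one has $B(0,s)=\{w\in\mathbb{C}^n:|w|<\tanh(s)\}$, so with $R':=\tanh(R)$ and $r':=\tanh(r)$ the balls $B(0,R)$ and $B(0,r)$ are concentric Euclidean balls of radii $R'<r'$. Setting $\delta:=(r'-R')/4$, every $z\in B(0,R)$ has the property that the closed Euclidean ball $D(z,2\delta)$ lies inside $B(0,r)$, which is what the rest of the argument will use.

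First I would apply the one-variable Cauchy integral formula to each partial derivative $\partial f/\partial z_j$ on a coordinate disc of radius $\delta$ centered at $z$, producing a dimensional constant $C_1=C_1(n)$ with
$$|\nabla f(z)|\leq \frac{C_1}{\delta}\sup_{w\in D(z,\delta)}|f(w)|,\qquad z\in B(0,R).$$
Next, because $\log|f|$ is plurisubharmonic and $t\mapsto e^{pt}$ is convex and nondecreasing, $|f|^p$ is plurisubharmonic for every $p>0$, hence satisfies the sub-mean-value inequality on Euclidean balls. For any $w\in D(z,\delta)$ the ball $D(w,\delta)$ is contained in $D(z,2\delta)\subset B(0,r)$, so
$$|f(w)|^p\leq\frac{1}{\nu(D(w,\delta))}\int_{D(w,\delta)}|f|^p\,d\nu\leq\frac{c_n}{\delta^{2n}}\int_{B(0,r)}|f|^p\,d\nu.$$
Taking the $p$-th root, substituting into the Cauchy estimate, and collecting the constants produces a $C=C(r,R,n,p)$ realizing the conclusion of the lemma.

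The only step requiring some care is the sub-mean-value inequality for $|f|^p$ when $0<p<1$, because $|f|^p$ is not a convex function of $|f|^2$ in that range; I would handle this precisely by the plurisubharmonicity argument above (via $\log|f|$), which is uniform in $p$. All the remaining ingredients---Cauchy's estimates and the identification of Bergman balls at the origin with Euclidean balls via \eqref{e0}---are entirely routine, so I do not expect any serious obstacle.
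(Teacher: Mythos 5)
Your proof is correct. Note that the paper itself does not prove this lemma; it only cites \cite[page 66]{zhu}, and your argument is essentially the standard textbook proof behind that citation: since Bergman balls centered at the origin are Euclidean balls of radii $\tanh R<\tanh r$ by \eqref{e0}, one separates the two radii, applies Cauchy's estimates on coordinate discs to bound $|\nabla f(z)|$ by a local sup of $|f|$, and then controls that sup by the integral over $B(0,r)$ via the sub-mean-value inequality for $|f|^p$, which you justify correctly for all $0<p<\infty$ through plurisubharmonicity of $\log|f|$. The only caveat is cosmetic: your constant depends on $n$ and $p$ in addition to $r$ and $R$, exactly as the constant in the cited result does; since $n$ and $p$ are fixed in the paper's application (Lemma \ref{l1}), this is harmless, though it would be cleaner to state the dependence explicitly.
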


\begin{lemma} \label{l1}
For any $r>0$, there exists a positive constant $C$ which depends only on $r$, such that
$$ | f(u)-f(v)| \leq C \beta(u,v) \|f\|_\infty,$$
for all $f\in H^\infty(\mathbb{B}_n)$ and $u,v\in \mathbb{B}_n$ so that $\beta(u,v)\leq r$.
\end{lemma}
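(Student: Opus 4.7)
The plan is to use the automorphism-invariance of both $\beta$ and the sup-norm in order to reduce to the situation where one of the two points is the origin, and then to derive the inequality from a gradient estimate (Lemma \ref{l4}) combined with integration along a straight line segment.

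More precisely, given $u,v\in\mathbb{B}_n$ with $\beta(u,v)\le r$, I would set $g:=f\circ\varphi_v$ and $w:=\varphi_v(u)$. Since $\varphi_v$ is an involutive automorphism, $g\in H^\infty(\mathbb{B}_n)$ with $\|g\|_\infty=\|f\|_\infty$, and one has $f(v)=g(0)$ and $f(u)=g(w)$. By automorphism-invariance of the Bergman metric, $\beta(0,w)=\beta(v,u)\le r$, and from \eqref{e0} this translates into the Euclidean bound $|w|=\tanh\beta(0,w)\le\tanh r<1$. Thus the entire problem reduces to showing
$$|g(w)-g(0)|\le C_r\,\beta(0,w)\,\|g\|_\infty$$
for $w\in B(0,r)$ (a Bergman ball), with $C_r$ depending only on $r$.

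For this step I would pick some auxiliary radius $r'>r$ (for instance $r'=2r$), and apply Lemma \ref{l4} with $p=1$, $R=r$, $r'$ in place of $r$, yielding a constant $C=C(r,r')$ such that
$$|\nabla g(z)|\le C\int_{B(0,r')}|g|\,d\nu\le C\,\nu(B(0,r'))\,\|g\|_\infty\le C_r\|g\|_\infty$$
for every $z\in B(0,r)$, where the dependence on $r'$ is absorbed into $C_r$ because $r'$ is a fixed function of $r$; the fact that $\nu(B(0,r'))$ is finite and depends only on $r'$ follows from \eqref{e1}. Now observe that the Euclidean segment $\{tw:0\le t\le 1\}$ lies entirely in $B(0,r)$, since $\beta(0,tw)=\operatorname{arctanh}(t|w|)\le\operatorname{arctanh}|w|=\beta(0,w)\le r$. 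Integrating along this segment,
$$|g(w)-g(0)|=\left|\int_0^1 \nabla g(tw)\cdot w\,dt\right|\le|w|\sup_{z\in B(0,r)}|\nabla g(z)|\le C_r|w|\|g\|_\infty.$$

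To close the argument I would use the elementary inequality $\tanh x\le x$ for $x\ge 0$, which gives $|w|=\tanh\beta(0,w)\le\beta(0,w)$, and so
$$|f(u)-f(v)|=|g(w)-g(0)|\le C_r\beta(0,w)\|g\|_\infty=C_r\beta(u,v)\|f\|_\infty,$$
as desired. There is no serious obstacle here; the only thing to keep track of is that the constants produced by Lemma \ref{l4} and by \eqref{e1} depend only on $r$ once $r'$ is fixed as a universal function of $r$. The reduction to $v=0$ is crucial because it lets us use the simple Euclidean segment from $0$ to $w$ (automatically inside $\mathbb{B}_n$) and relate its Euclidean length to the Bergman distance by the monotone formula $\beta(0,w)=\operatorname{arctanh}|w|$.
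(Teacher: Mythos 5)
Your proposal is correct and follows essentially the same route as the paper: reduce to the case $v=0$ via the involution $\varphi_v$, integrate along the Euclidean segment from $0$ to $w=\varphi_v(u)$, and control the gradient on the Bergman ball $B(0,r)$ by Lemma \ref{l4} applied on the larger ball $B(0,2r)$. The only (harmless) differences are cosmetic: you fix $p=1$ and pass from $|w|$ to $\beta(0,w)$ via $\tanh x\le x$, whereas the paper keeps a general $p$ and invokes the equivalence of the Bergman and Euclidean metrics on the compact set $|w|\le \tanh r$.
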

\begin{proof}
Let $u=\varphi_v(w)$ and $g=f\circ \varphi_v$. Then,
$$ | f(u)-f(v)|= |g(w)-g(0)|= \Big| \int_0^1 \Big( \sum_{i=1}^n w_i \frac{\partial g}{\partial w_i} (tw) \Big) dt\Big|.$$
On the compact set $|w|\leq r$, the Bergman metric is equivalent to the Euclidean metric. Thus, there is a positive constant $C_r$ which depends only on r such that
$$| f(u)-f(v)|\leq C_r \beta(w,0) \sup \{ |\nabla g (z)| \ z\in B(0,r)\}.$$
Since the Bergman metric is automorphism-invariant, we have
$$| f(u)-f(v)|\leq C_r \beta(u,v) \sup \{ |\nabla f\circ \varphi_v (z)| \ z\in B(0,r)\}.$$
By Lemma \ref{l4}, there is a positive constant $D_r$ depending only on $r$ such that  the above value is less than or equal to
\begin{equation*}
D_r \beta(u,v) \Big( \int_{B(0,2r)} |f(\varphi_{v}(w))|^p d\nu(w)\Big)^{1/p}\leq D_r \nu(B(0,2r)^\frac{1}{p} \beta(u,v) \|f\|_\infty.
\end{equation*}
This completes the proof.
\end{proof}

The following theorem gives a sufficient condition for the uniform mean ergodicity of composition operators in the general interior fixed-point case.
\begin{theorem} \label{t3}
Let $\varphi$ be a holomorphic self-map of the unit ball with a fixed point in $\mathbb{B}_n$ and $\rho$ be the holomorphic retraction associated with $\varphi$. If there is a positive integer $k$ such that
$$\lim_{j\rightarrow \infty} \sup_{z\in \mathbb{B}_n} \beta(\varphi_{kj}(z),\rho(z))= 0,$$
 then $C_\varphi$ is uniformly mean ergodic.
\end{theorem}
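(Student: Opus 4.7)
The plan is to show that
$M_j(C_\varphi) \longrightarrow \frac{1}{k}\sum_{i=0}^{k-1} C_{\rho\circ\varphi_i}$
in operator norm. This candidate limit is natural in view of Lemma~\ref{l3}, although here I cannot appeal directly to that lemma since its hypothesis (mean ergodicity) is what I am trying to prove. A preliminary observation I would record is that the identity $\rho\circ\varphi_k=\rho$ still holds in the present setting: for any $z\in\mathbb{B}_n$, one has $\varphi_{k(j+1)}(z)=\varphi_{kj}(\varphi_k(z)) \to \rho(\varphi_k(z))$ (since Bergman convergence at $\varphi_k(z)$ follows from the hypothesis), while the hypothesis also gives $\varphi_{k(j+1)}(z)\to \rho(z)$; thus $\rho(\varphi_k(z))=\rho(z)$. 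Consequently $C_{\rho\circ\varphi_0}=C_{\rho\circ\varphi_k}=C_\rho$, and the sum defining the candidate limit is well-defined modulo $k$.

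The key technical step is to upgrade the hypothesis on the Bergman distance into an operator-norm statement. For each $0\leq i\leq k-1$, the iterates commute, so $\varphi_{kj+i}=\varphi_{kj}\circ\varphi_i$, and
$\beta(\varphi_{kj+i}(z), \rho\circ\varphi_i(z))=\beta(\varphi_{kj}(\varphi_i(z)), \rho(\varphi_i(z))) \leq \epsilon_j := \sup_{w\in\mathbb{B}_n}\beta(\varphi_{kj}(w), \rho(w))$,
with $\epsilon_j\to 0$ by hypothesis. For $j$ large enough, $\epsilon_j\leq 1$, so Lemma~\ref{l1} with $r=1$ provides a constant $C$ (independent of $j,i,z,f$) such that $|f(\varphi_{kj+i}(z))-f(\rho\circ\varphi_i(z))|\leq C\,\epsilon_j \,\|f\|_\infty$ for every $f\in H^\infty(\mathbb{B}_n)$. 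Taking the supremum over $z\in\mathbb{B}_n$ and over $\|f\|_\infty\leq 1$ yields
$\|C_{\varphi_{kj+i}}-C_{\rho\circ\varphi_i}\|_{\mathcal{L}(H^\infty(\mathbb{B}_n))}\leq C\epsilon_j \to 0$ for each residue $i$.

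I would conclude by a routine Cesàro averaging argument. Writing $j=Nk+s$ with $0\leq s<k$ and regrouping,
$M_j(C_\varphi)=\tfrac{Nk}{j}\cdot\tfrac{1}{N}\sum_{\ell=0}^{N-1}\tfrac{1}{k}\sum_{i=1}^{k}C_{\varphi_{k\ell+i}}+\tfrac{1}{j}\sum_{i=1}^{s}C_{\varphi_{Nk+i}}$.
Each composition operator on $H^\infty(\mathbb{B}_n)$ has norm at most $1$, so the tail term is bounded by $s/j<k/j$ and vanishes. In the main term, by the previous paragraph the inner block of length $k$ converges in operator norm to $\tfrac{1}{k}\sum_{i=1}^k C_{\rho\circ\varphi_i}=L$ as $\ell\to\infty$ (using $\rho\circ\varphi_k=\rho$); hence its Cesàro average in $\ell$ converges to the same limit $L$, and with $Nk/j\to 1$ this yields $M_j(C_\varphi)\to L$ in operator norm. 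The only non-routine ingredient is the passage from uniform Bergman convergence to operator-norm convergence, which is precisely the role of Lemma~\ref{l1}; everything else is bookkeeping.
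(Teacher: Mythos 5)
Your proposal is correct and follows essentially the same route as the paper's proof: Lemma~\ref{l1} upgrades the uniform Bergman estimate to operator-norm convergence $C_{\varphi_{kj+i}}\to C_{\rho\circ\varphi_i}$ for each residue $i$, and the conclusion follows by Ces\`aro averaging. You merely make explicit two points the paper leaves as ``one can see that'' (the identity $\rho\circ\varphi_k=\rho$ and the block-decomposition bookkeeping for $M_j(C_\varphi)$), which is fine.
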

\begin{proof}
Fix $r>0$. By the above assumption, there is a positive integer $N$ such that
$$\varphi_{kj}(z)\in B(\rho(z),r), \ \ j\geq N, \ and \ z\in \mathbb{B}_n.$$
Thus, from Lemma \ref{l1}, there is a positive constant $C_r$ so that
$$|f(\varphi_{kj}(z))-f(\rho(z))|^2 \leq C_r\beta(\varphi_{kj}(z),\rho(z)) \|f\|^2_\infty,$$
for all $j\geq N$ and $z\in \mathbb{B}_n$. Hence, $C_{\varphi_{kj}} \longrightarrow C_\rho$ in the operator norm of $H^\infty(\mathbb{B}_n)$. This implies that $C_{\varphi_{kj+i}}\rightarrow C_{\rho\circ\varphi_i}$ for all $0\leq i\leq k-1$. Therefore, one can see that
$$\lim_{j\rightarrow\infty} M_j(C_\varphi )= \frac{1}{k}\sum_{i=0}^{k-1} C_{\rho\circ\varphi_i},$$
that is, $C_\varphi$ is uniformly mean ergodic.
\end{proof}

\begin{remark}
Note that if $a\in \mathbb{B}_n$, then since the Bergman metric is invariant under automorphisms, we have
\begin{align*}
\sup_{z\in \mathbb{B}_n} \beta(\varphi_a\circ \varphi_{kj}\circ\varphi_a(z),\varphi_a\circ \rho\circ\varphi_a(z))&=\sup_{z\in \mathbb{B}_n} \beta(\varphi_{kj}\circ\varphi_a(z),\rho\circ\varphi_a(z))\\
&=\sup_{z\in \mathbb{B}_n} \beta(\varphi_{kj}(z),\rho(z))
\end{align*}
Thus, the sufficient condition given in Theorem \ref{t3} is equivalent to the condition
$$\lim_{j\rightarrow \infty} \sup_{z\in \mathbb{B}_n} \beta(\varphi_a\circ \varphi_{kj}\circ\varphi_a(z),\varphi_a\circ \rho\circ\varphi_a(z))= 0.$$
Compare this with condition (iii) in Theorem \ref{t2}.
\end{remark}

\begin{proposition} \label{p2}
Let $\varphi$ be a holomorphic self-map of the unit ball, $\varphi(0)=0$ and $\rho$ be the holomorphic retraction associated with $\varphi$. Then,
\begin{equation}\label{e10}
M=\rho(\mathbb{B}_n)=L_U\cap \mathbb{B}_n=\{z\in \mathbb{B}_n; \ |\varphi(z)|=|z|\},
\end{equation}
where $L_U$ is the vector space obtained in Theorem \ref{t01}.
\end{proposition}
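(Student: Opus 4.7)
The plan is to establish the chain of equalities $M=\rho(\mathbb{B}_n)=L_U\cap\mathbb{B}_n=\{z\in\mathbb{B}_n:|\varphi(z)|=|z|\}$. The first equality is immediate from Theorem \ref{t02}, where $M$ is defined as the image of $\rho$. The key preliminary I would extract from $\varphi(0)=0$ is that $d_0\varphi$ is a linear contraction of $\mathbb{C}^n$ (Schwarz lemma). A standard Hilbert-space computation then shows: if $A$ is a contraction and $Av=\lambda v$ with $|\lambda|=1$, then $A^{*}v=\bar\lambda v$, since $\|Av\|=\|v\|$ forces $\langle(I-A^{*}A)v,v\rangle=0$ and hence $A^{*}Av=v$. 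From this one deduces that eigenvectors of $d_0\varphi$ for distinct unimodular eigenvalues are mutually orthogonal and that any such eigenvector is orthogonal to every generalized eigenvector attached to an eigenvalue in $\mathbb{D}$. Combined with the diagonalizability of $d_0\varphi|_{L_U}$ in Theorem \ref{t01}(iv), this yields two structural facts I would use throughout: $L_N\perp L_U$ in the Euclidean inner product, and $d_0\varphi|_{L_U}$ is a unitary on $L_U$.

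To prove $L_U\cap\mathbb{B}_n\subseteq\{z:|\varphi(z)|=|z|\}$, I would pick $v\in L_U\cap\mathbb{B}_n\setminus\{0\}$; unitarity gives $|d_0\varphi(v)|=|v|$, and Proposition \ref{p03} in its tangent-vector form produces a subspace containing $v$ on which $\varphi$ coincides with $d_0\varphi$, so $|\varphi(v)|=|v|$. For the reverse inclusion, given $z_0$ with $|\varphi(z_0)|=|z_0|$, Proposition \ref{p03} furnishes a subspace $V\ni z_0$ on which $\varphi$ is the restriction of a unitary $U$; I take $V$ to be $d_0\varphi$-invariant, as arranged in the construction of \cite[Proposition 2.2.14]{abate} or by passing to the $d_0\varphi$-cyclic subspace generated by $z_0$. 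Invariance gives $\varphi_k(z_0)=U^k z_0$, hence $|(d_0\varphi)^k(z_0)|=|z_0|$ for every $k$. Decomposing $z_0=z_N+z_U$ along the orthogonal splitting and using unitarity of $d_0\varphi|_{L_U}$, one finds $|(d_0\varphi)^k(z_N)|^2+|z_U|^2=|z_N|^2+|z_U|^2$, so $|(d_0\varphi)^k(z_N)|=|z_N|$ for all $k$. Since $sp\ d_0\varphi|_{L_N}\subset\mathbb{D}$ implies $\|(d_0\varphi|_{L_N})^k\|\to 0$, this forces $z_N=0$, so $z_0\in L_U$.

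It remains to show $M=L_U\cap\mathbb{B}_n$. For $L_U\cap\mathbb{B}_n\subseteq M$: by the previous step, the iterates of $\varphi$ on $L_U\cap\mathbb{B}_n$ agree with those of the unitary $d_0\varphi|_{L_U}$. Choosing a subsequence $\varphi_{k_j}\to\rho$ from Theorem \ref{t02} and passing to derivatives at $0$ by Cauchy estimates, $(d_0\varphi|_{L_U})^{k_j}\to d_0\rho|_{L_U}$; this limit is both unitary (limit of unitaries on the finite-dimensional $L_U$) and idempotent (from $\rho^2=\rho$), hence the identity on $L_U$. So $\rho$ restricts to the identity on $L_U\cap\mathbb{B}_n$, placing it inside $M$. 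For $M\subseteq L_U\cap\mathbb{B}_n$: the same subsequential computation, combined with $(d_0\varphi)^{k_j}|_{L_N}\to 0$, shows $d_0\rho$ is the projection onto $L_U$ along $L_N$, so $T_0M=L_U$ and $\dim M=\dim L_U$. Since $L_U\cap\mathbb{B}_n\subseteq M$ are complex submanifolds of equal dimension, $L_U\cap\mathbb{B}_n$ is open in $M$; it is also relatively closed in $M$ (trace of the closed subspace $L_U\subset\mathbb{C}^n$), and $M=\rho(\mathbb{B}_n)$ is connected as the continuous image of $\mathbb{B}_n$. The non-empty clopen subset $L_U\cap\mathbb{B}_n$ then exhausts $M$.

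The main delicacy I anticipate is ensuring the subspace $V$ in Proposition \ref{p03} can be chosen $d_0\varphi$-invariant, which is what licenses the identity $\varphi_k(z_0)=U^k z_0$ used in the second paragraph. If the quoted statement does not grant this automatically, I would either extract invariance from the proof in \cite{abate} or pass to the smallest $d_0\varphi$-invariant subspace containing $z_0$ and verify that the unitarity conclusion of \ref{p03} persists on this enlargement.
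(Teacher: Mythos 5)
Most of your argument is correct and, for the first two equalities, genuinely more self-contained than the paper's: you prove the orthogonality $L_N\perp L_U$ and the unitarity of $d_0\varphi|_{L_U}$ from the contraction identity $A^*v=\bar\lambda v$, you get $L_U\cap\mathbb{B}_n\subseteq\{z:|\varphi(z)|=|z|\}$ cleanly from the tangent-vector form of Proposition \ref{p03}, and your dimension-plus-connectedness argument for $M=L_U\cap\mathbb{B}_n$ replaces the paper's bare citation of \cite[Lemma 2.2.13 and Proposition 2.2.33]{abate} by a complete proof. All of that holds up.

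The delicacy you flagged in the reverse inclusion, however, is a genuine gap and it cannot be closed. Proposition \ref{p03} only says that $\varphi$ acts on the slice $V\cap\mathbb{B}_n$ as the restriction of a unitary of $\mathbb{C}^n$; it maps that slice linearly and isometrically onto a possibly \emph{different} slice, and $V$ need not be $d_0\varphi$-invariant. Passing to the $d_0\varphi$-cyclic subspace generated by $z_0$ destroys the unitarity conclusion, and no invariant enlargement exists in general, because the inclusion $\{z:|\varphi(z)|=|z|\}\subseteq L_U\cap\mathbb{B}_n$ is simply false: take $\varphi(z_1,z_2)=(z_2,0)$ on $\mathbb{B}_2$. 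Then $\varphi(0)=0$, $|\varphi(0,z_2)|=|(0,z_2)|$ for every $|z_2|<1$, yet $d_0\varphi$ is nilpotent, so $L_U=\{0\}$, $\varphi_2\equiv 0$, $\rho\equiv 0$ and $M=\{0\}$, while $\{z:|\varphi(z)|=|z|\}=\{0\}\times\mathbb{D}$. So the third equality in \eqref{e10} fails as stated, and your step $\varphi_k(z_0)=U^kz_0$ is exactly where any proof must break. For comparison, the paper's own proof of this inclusion has the same defect: it writes $sp\ d_0\varphi|_{\langle z\rangle}\subset\partial\mathbb{D}$, tacitly treating $\langle z\rangle$ as $d_0\varphi$-invariant, which Proposition \ref{p03} does not provide. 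What your machinery does prove, with no invariance of $V$ needed, is the corrected statement $M=L_U\cap\mathbb{B}_n=\{z:|\varphi_k(z)|=|z|\ \text{for all}\ k\geq 1\}$: applying Proposition \ref{p03} to each iterate $\varphi_k$ gives $|(d_0\varphi)^k(z)|=|z|$ for every $k$, and then your orthogonal decomposition together with $\|(d_0\varphi|_{L_N})^k\|\to 0$ forces $z_N=0$.
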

\begin{proof}
It follows from \cite[Lemma 2.2.13 and Proposition 2.2.33]{abate} that $M$ coincides with $L_U\cap \mathbb{B}_n$.

The proof of the last equality is as follows: If $|\varphi(z)|=|z|$, then according to Proposition \ref{p03}, $\varphi\mid_{\langle z\rangle}$ is a unitary matrix. Hence, $sp \ d_0 \varphi\mid_{\langle z\rangle} \subset \partial \mathbb{D}$ which implies that $z\in L_U\cap \mathbb{B}_n$. Conversely, let $z\in L_U\cap \mathbb{B}_n$. Since $\varphi(0)=0$ and $sp \ d_0 \varphi\mid_{L_U} \subset \partial \mathbb{D}$, it follows that $\varphi\mid_{L_U}$ is a unitary matrix. Therefore, $|\varphi(z)|=|z|$.
\end{proof}

For $k>0$ and $\zeta\in \partial \mathbb{B}_n$, we define the ellipsoid
$$E(k,\zeta)=\{ z\in \mathbb{B}_n: \ |1-\langle z,\zeta\rangle|^2\leq k(1-|z|^2)\}.$$
Let $\rho$ be a holomorphic self-map of the unit ball and $0<\eta<1$. Put
$$L(\rho,\eta)=\{z\in \mathbb{B}_n, \ \ |z-\rho(z)|\geq\eta\}.$$
The following lemma is an extension of \cite[Lemma 12]{cowen1}.

\begin{lemma} \label{l6}
Let $\varphi$ be a holomorphic self-map of the unit ball, $\varphi(0)=0$, and $\rho$ be the holomorphic retraction associated with $\varphi$.  If $0<\eta<1$ so that $L(\rho,\eta)\neq \emptyset$, then there is some $A>1$ such that
$$\dfrac{1-|\varphi(z)|}{1-|z|}>A, \qquad \forall z\in L(\rho,\eta).$$
\end{lemma}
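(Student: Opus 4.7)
The plan is to argue by contradiction. Since $\varphi(0)=0$, the Schwarz lemma in the ball gives $|\varphi(z)|\leq|z|$, and hence $(1-|\varphi(z)|)/(1-|z|)\geq 1$ on $\mathbb{B}_n$. If no such $A>1$ exists, there is a sequence $\{z_j\}\subset L(\rho,\eta)$ with
$$\frac{1-|\varphi(z_j)|}{1-|z_j|}\longrightarrow 1.$$
By compactness of $\overline{\mathbb{B}_n}$ I may pass to a subsequence and assume $z_j\to z_*\in\overline{\mathbb{B}_n}$.

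The interior case $|z_*|<1$ is settled at once. Since $L(\rho,\eta)$ is closed in $\mathbb{B}_n$, we have $z_*\in L(\rho,\eta)$, i.e.\ $|z_*-\rho(z_*)|\geq\eta$. On the other hand, $1-|z_j|\to 1-|z_*|>0$ forces $|\varphi(z_j)|\to|z_*|$; continuity of $\varphi$ then gives $|\varphi(z_*)|=|z_*|$, so Proposition \ref{p2} puts $z_*\in M$, whence $\rho(z_*)=z_*$. Thus $|z_*-\rho(z_*)|=0$, a contradiction.

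The substance of the proof lies in the boundary case $z_j\to\zeta\in\partial\mathbb{B}_n$. My plan here is to invoke the ball version of Julia's lemma: the assumption $(1-|\varphi(z_j)|)/(1-|z_j|)\to 1$ bounds the boundary dilation $\alpha_\varphi(\zeta)$ by $1$, while evaluating Julia's inequality at the interior point $0$ (using $\varphi(0)=0$) gives the reverse inequality $\alpha_\varphi(\zeta)\geq 1$, so $\alpha_\varphi(\zeta)=1$. The equality case is rigid: combined with the strict Schwarz lemma embodied in Proposition \ref{p03}, it forces $\varphi$ to act isometrically on the complex line $\{t\zeta:t\in\mathbb{D}\}$. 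By Proposition \ref{p2} the whole slice then lies in $M=L_U\cap\mathbb{B}_n$, in particular $\zeta\in L_U\cap\partial\mathbb{B}_n\subset\overline M$.

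It remains to derive from this a contradiction to $z_j\in L(\rho,\eta)$. The idea is that since the radial slice to $\zeta$ lies in $M$ and $\rho$ is the identity on $M$, points close to $\zeta$ should have retraction close to $\zeta$, so $|z_j-\rho(z_j)|\to 0$. Making this rigorous should use the identity $\rho\circ\varphi=(\varphi|_M)\circ\rho$ (which follows by passing to the limit in a subsequence $\varphi_{k_j}\to\rho$ via Theorem \ref{t02}) together with the Bergman-metric contraction \eqref{e5}. I expect this last step --- analysing the boundary behaviour of the holomorphic retraction $\rho$ and ruling out pathological tangential approach of the sequence $z_j$ --- to be the main technical obstacle of the proof.
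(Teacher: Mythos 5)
Up to the decisive step your outline follows the same road as the paper: the interior-limit case (ratio $\to 1$ forces $|\varphi(z_*)|=|z_*|$, hence $z_*\in M$ by Proposition \ref{p2}, contradicting $z_*\in L(\rho,\eta)$) is exactly the paper's first observation, and your boundary-case use of Julia's lemma together with the equality case of the Schwarz lemma (Proposition \ref{p03}, equality \eqref{e10}) to conclude that the whole radial segment $\{t\zeta\}$ lies in $M$ and is fixed by $\rho$ is the same computation the paper carries out with the ellipsoids $E(c/(1-c),\zeta)$; the paper merely packages the uniformity differently, via lower semicontinuity of $d$ on the compact set $L(\rho,\eta)\cup(\partial L(\rho,\eta)\cap\partial\mathbb{B}_n)$ rather than via a minimizing sequence.

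The problem is that your proposal stops exactly where the proof has to be made: you never derive a contradiction from ``$t\zeta\in M$ for all $t$'' together with ``$z_j\in L(\rho,\eta)$, $z_j\to\zeta$''. The sentence ``points close to $\zeta$ should have retraction close to $\zeta$'' is precisely the assertion that needs proof, and the route you sketch does not obviously supply it: the contraction \eqref{e5} only gives $\beta(\rho(z_j),t\zeta)=\beta(\rho(z_j),\rho(t\zeta))\le\beta(z_j,t\zeta)$, and for a sequence approaching $\zeta$ tangentially the quantity $\inf_{t}\beta(z_j,t\zeta)$ tends to infinity, so no bound on the Euclidean quantity $|z_j-\rho(z_j)|$ follows; likewise, applying Julia's lemma to $\rho$ at $\zeta$ only confines $\rho(z_j)$ to horospheres $E(k_j,\zeta)$ whose parameters $k_j$ are unbounded along a tangential sequence. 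The commutation identity $\rho\circ\varphi=\varphi\circ\rho$ is true (pass to the limit along $\varphi_{k_j}\to\rho$), but it does not by itself control this boundary behaviour either. So, as written, the boundary case is incomplete and the lemma is not proved; you have correctly identified the crux (which, for what it is worth, is the very step the paper itself dispatches in one sentence after the radial rigidity), but announcing it as ``the main technical obstacle'' is not the same as overcoming it.
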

\begin{proof}
For every $z\in L(\rho,\eta)$, we have $|\varphi(z)|<|z|$. Indeed, if $z_0\in L(\rho,\eta)$ so that $|\varphi(z_0)|=|z_0|$, then it follows from \ref{e10} that $z_0\in M=\rho(\mathbb{B}_n)$. Since $\rho$ is the identity on $M$, we have $\rho(z_0)=z_0$ which contradicts the assumption  $z_0\in L(\rho,\eta)$.

Let
$$d(z)=\dfrac{1-|\varphi(z)|}{1-|z|}, \qquad \forall z\in \mathbb{D},$$
and
$$d(\zeta)=\liminf_{z\rightarrow\zeta}\dfrac{1-|\varphi(z)|}{1-|z|}, \qquad \forall \zeta\in \partial\mathbb{D}.$$
From Schwarz’s Lemma $d(\zeta)\geq 1$. We claim that $d(\zeta)>1$ for every $\zeta\in \partial L(\rho,\eta)\cap\partial\mathbb{B}_n$.
If $\zeta\in \partial L(\rho,\eta)\cap\partial\mathbb{B}_n$ so that $d(\zeta)=1$, then clearly $\varphi(\zeta)\in \partial\mathbb{B}_n$.

Consider $0<c<1$. It follows from Julia's lemma \cite[Lemma 2.77]{Cowen2} that the image of the ellipsoid $E(c/(1-c),\zeta)$ under $\varphi$
is in the ellipsoid $E(c/(1-c),\varphi(\zeta))$. Since $(1-2c)\zeta$ has the smallest modulus in $E(c/(1-c),\zeta)$, Schwarz's Lemma implies that  $|\varphi((1-2c)\zeta)|=|1-2c|$. Thus, $(1-2c)\zeta\in M$ and so
 $$\rho((1-2c)\zeta)=(1-2c)\zeta,$$
  for all $0<c<0$. This contradicts the assumption that $\zeta\in \partial L(\rho,\eta)\cap\partial\mathbb{B}_n$.

It is easy to show that all the limit points of $L(\rho,\eta)$ in $\mathbb{B}_n$ are in $L(\rho,\eta)$ itself.
Since $d$ is lower semicontinuous, $d$ takes its minimum value on the compact set
$$L(\rho,\eta)\cup (\partial L(\rho,\eta)\cap\partial\mathbb{B}_n).$$
In the first part, we proved $d(z)>1$ for all $z\in L(\rho,\eta)$. In the second part, it was proved that $d(\zeta)>1$ for all $\zeta\in \partial L(\rho,\eta)\cap\partial\mathbb{B}_n$. This leads to the desired result.
\end{proof}

E. Jord\'{a} and A. Rodr\'{i}guez-Arenas \cite[Proposition 1.2]{jorda} gave the following result which is a consequence of \cite[ Theorem 3.4, Corollaries (ii) and (iii)]{yosida2}. Recall the concept of the point spectrum $\sigma_p(T)$: the set of all $\lambda \in \mathbb{C}$ such that $T-\lambda I$ is not injective.

\begin{proposition} \label{p1}
The following conditions are equivalent for $T \in \mathcal{L}(X)$:
\begin{itemize}
\item[(a)] $\{T^j\}_j$ converges in $\mathcal{L}(X)$ to a finite rank projection $P$.
\item[(b)] $T$ is power bounded and quasi-compact. In addition, $\sigma_p(T) \cap \partial\mathbb{D} \subseteq \{1\}$.
\end{itemize}
\end{proposition}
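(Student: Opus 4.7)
The plan is to prove the two implications separately. The forward implication $(a) \Rightarrow (b)$ is entirely formal, while the reverse direction is the substantive one and rests on the spectral decomposition theorem for quasi-compact power-bounded operators that underlies the citation to \cite{yosida2}.

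For $(a) \Rightarrow (b)$ I would argue as follows. Norm convergence of $\{T^j\}$ gives $\sup_j \|T^j\| < \infty$, so $T$ is power bounded; and since $P$ has finite rank, $P \in \mathcal{K}(X)$, so
\[
\|T^j\|_e \leq \|T^j - P\| \longrightarrow 0,
\]
giving $\|T^{n_0}\|_e < 1$ for large $n_0$, that is, quasi-compactness. If $\lambda \in \sigma_p(T) \cap \partial\mathbb{D}$ has eigenvector $x \neq 0$, then $T^j x = \lambda^j x \to Px$, which forces the scalar sequence $\lambda^j$ to converge in $\mathbb{C}$; on $\partial\mathbb{D}$ this happens only for $\lambda = 1$.

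For $(b) \Rightarrow (a)$ the key input is the Yosida--Kakutani-type structural theorem supplied by \cite{yosida2}: a power-bounded operator with $\|T^{n_0}\|_e < 1$ has peripheral spectrum $\sigma(T) \cap \partial\mathbb{D}$ consisting of finitely many eigenvalues of finite algebraic multiplicity, each semi-simple (power boundedness rules out nontrivial Jordan blocks on the unit circle), while the rest of $\sigma(T)$ lies in some disk of radius $r < 1$. The hypothesis $\sigma_p(T) \cap \partial\mathbb{D} \subseteq \{1\}$ then reduces the peripheral spectrum to at most $\{1\}$ (and if it is empty the spectral radius is already $<1$ and the result is immediate with $P = 0$). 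Letting $P$ be the Riesz projection onto the spectral part $\{1\}$ splits $X = \operatorname{Im} P \oplus \ker P$ into $T$-invariant summands with $\operatorname{Im} P$ finite-dimensional, $\sigma(T|_{\operatorname{Im} P}) = \{1\}$, and $\sigma(T|_{\ker P}) \subseteq r\overline{\mathbb{D}}$. A finite-dimensional power-bounded operator with single eigenvalue $1$ must be the identity (Jordan canonical form), and Gelfand's formula yields $\|T^j|_{\ker P}\| \to 0$; combining the two pieces gives $T^j \to P$ in $\mathcal{L}(X)$ with $P$ of finite rank. The main obstacle is the Yosida--Kakutani theorem itself; once it is in hand, everything else is finite-dimensional linear algebra and Gelfand's formula, and neither the power boundedness hypothesis (which rules out Jordan blocks at $1$) nor the quasi-compactness hypothesis (which ensures the peripheral spectrum is discrete) can be dropped.
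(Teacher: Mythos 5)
Your argument is correct. Note that the paper gives no proof of Proposition \ref{p1} at all: it is quoted from \cite[Proposition 1.2]{jorda}, which is itself deduced from the Yosida--Kakutani results in \cite{yosida2}, and your proof --- the formal direction (a)$\Rightarrow$(b), plus, for (b)$\Rightarrow$(a), the fact that quasi-compactness forces the peripheral spectrum to consist of finitely many finite-rank Riesz points (hence eigenvalues, so the hypothesis on $\sigma_p$ reduces it to $\{1\}$), followed by the Riesz projection splitting and Gelfand's formula --- is exactly the standard derivation from that same structure theorem, so it follows the route the cited sources take rather than anything genuinely different.
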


From a careful read of \cite[Theorem 2]{bern}, \cite[Page 203]{hoffman} and \cite[Lemma 13]{cowen1}, one can obtain the following lemma.
\begin{lemma}\label{l5}
 Let $\{x_j\}$ be a sequence of points in $\mathbb{B}_n$. Consider the following conditions.
\begin{itemize}
\item[(i)] There is some $0<a<1$ such that for every positive integer $j$, we have
$$\dfrac{1-|x_{j+1}|}{1-|x_{j}|}<a.$$
\item[(ii)] There is some $\delta>0$ such that for every positive integer $k$, we have
$$\prod_{j, j\neq k} |\varphi_{x_k}(x_j)|\geq \delta.$$
\end{itemize}
Then, (i) implies (ii). Moreover, (ii) implies that there is an $M>0$ which depends only on $\delta$, and a sequence $\{f_{l}\}_{l=1}^\infty$ in $H^\infty(\mathbb{B}_n)$ such that:
\begin{itemize}
\item[(a)] $f_{l}(x_l)=1$ and $f_l(x_j)=0$ if $l\neq j$.
\item[(b)]  $\sum |f_l(z)|\leq M$ for all $z\in \mathbb{B}_n$.
\end{itemize}
\end{lemma}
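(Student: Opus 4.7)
The plan is to handle the two halves of the lemma separately. For (i) $\Rightarrow$ (ii), I would estimate each factor $|\varphi_{x_k}(x_j)|$ from below. The key input is the M\"obius identity $1-|\varphi_u(v)|^2=\frac{(1-|u|^2)(1-|v|^2)}{|1-\langle u,v\rangle|^2}$ combined with the elementary bound $|1-\langle u,v\rangle|\geq 1-|u|\,|v|$. A direct algebraic check using
\[
(1-|u|\,|v|)^2-(1-|u|^2)(1-|v|^2)=(|u|-|v|)^2
\]
then yields the sharper lower bound $|\varphi_{x_k}(x_j)|\geq \frac{\bigl||x_j|-|x_k|\bigr|}{1-|x_j|\,|x_k|}$, reducing the whole problem to the radial (one-variable) case.

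Writing $r_j:=|x_j|$ and $u_j:=1-r_j$, condition (i) gives $u_{j+m}<a^m u_j$. Setting $u:=u_{\min(j,k)}$, $v:=u_{\max(j,k)}$, and $\alpha:=v/u\leq a^{|j-k|}$, we have $1-r_j r_k=u+v-uv$ and $|r_j-r_k|=u-v$, so
\[
\frac{|r_j-r_k|}{1-r_j r_k}=\frac{1-\alpha}{1+\alpha(1-u)}\;\geq\;\frac{1-\alpha}{1+\alpha}\;\geq\;\frac{1-a^{|j-k|}}{1+a^{|j-k|}}.
\]
Consequently, for each fixed $k$,
\[
\prod_{j\neq k}|\varphi_{x_k}(x_j)|\;\geq\;\prod_{l\geq 1}\!\left(\frac{1-a^l}{1+a^l}\right)^{\!2}=:\delta,
\]
and since $\sum_{l\geq 1}a^l<\infty$, $\delta>0$ depends only on $a$, which gives (ii).

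For the second part — (ii) implies the existence of $\{f_l\}\subset H^\infty(\mathbb{B}_n)$ with (a) and (b) — I would invoke the interpolation theory for the unit ball developed by Berndtsson in \cite{bern}. Condition (ii) is precisely his Carleson-type separation hypothesis, and his construction (compared with the disc presentation in \cite[p.~203]{hoffman} and the adaptation in \cite[Lemma 13]{cowen1}) produces bounded solvers $f_l$ as products of M\"obius-type factors, corrected if necessary by a $\bar\partial$-estimate. The pointwise bound in (b) is not an additional hypothesis but a byproduct of the linearity of that construction: the operator $(c_l)\mapsto \sum_l c_l f_l$ is a bounded linear map $\ell^\infty\to H^\infty(\mathbb{B}_n)$ with norm $M=M(\delta)$, and taking the supremum over unimodular sequences $(c_l)$ recovers $\sum_l |f_l(z)|\leq M$ for every $z\in\mathbb{B}_n$. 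I expect the main obstacle to be verifying that the cited constructions do output a genuinely linear interpolation operator — i.e., extracting this stronger pointwise-sum bound from them — rather than any new analytic estimate.
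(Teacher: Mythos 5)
Your proposal is correct. Bear in mind, though, that the paper offers no proof of this lemma at all: it only remarks that the statement follows ``from a careful read'' of Berndtsson's Theorem 2, Hoffman p.~203, and Cowen--MacCluer's Lemma 13, so your write-up is genuinely more detailed than the paper on the first half. Your proof of (i)$\Rightarrow$(ii) is a correct, self-contained addition: the cited Lemma 13 of Cowen--MacCluer is the one-variable statement, and the reduction you supply, $|\varphi_{x_k}(x_j)|\ge \bigl||x_j|-|x_k|\bigr|/(1-|x_j|\,|x_k|)$ via $1-|\varphi_u(v)|^2=(1-|u|^2)(1-|v|^2)/|1-\langle u,v\rangle|^2$ and $|1-\langle u,v\rangle|\ge 1-|u|\,|v|$, is exactly what is needed to pass from $\mathbb{D}$ to $\mathbb{B}_n$; the radial estimate $\frac{|r_j-r_k|}{1-r_jr_k}\ge\frac{1-a^{|j-k|}}{1+a^{|j-k|}}$ and the choice $\delta=\prod_{l\ge1}\bigl(\frac{1-a^l}{1+a^l}\bigr)^2>0$ (each gap $l=|j-k|$ occurs at most twice, and the product converges since $\sum_l a^l<\infty$) are correct and give a $\delta$ depending only on $a$, uniformly in $k$. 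For (ii)$\Rightarrow$(a),(b) you lean on Berndtsson's Theorem 2 exactly as the paper does, so you are at the same level of rigor as the source you are checked against; your extra observation is also sound: if $T:\ell^\infty\to H^\infty(\mathbb{B}_n)$ is linear, bounded, and satisfies $T(c)(x_j)=c_j$, then with $f_l:=T(e_l)$ one gets, for every finite set $F$ and every $z$, unimodular scalars $c_l$ with $\sum_{l\in F}|f_l(z)|=\bigl|T\bigl(\sum_{l\in F}c_le_l\bigr)(z)\bigr|\le\|T\|$, whence (b) with $M=\|T\|$. The only caveat --- which you already flag --- is that the sum bound (b), or equivalently the linearity of the interpolation scheme, must indeed be extracted from Berndtsson's construction (the ball analogue of the Beurling-type functions in Hoffman and Cowen--MacCluer); this is precisely the ``careful read'' the paper delegates to the reader, so it is a shared reliance on the literature rather than a gap peculiar to your argument.
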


Now, we are in a position to prove the main result.

\begin{theorem} \label{t2}
Let $\varphi$ be a holomorphic self-map of the unit ball with a fixed point $a\in \mathbb{B}_n$ so that $ sp \ d_a \varphi \subset \mathbb{D}$. Then, the following statements are equivalent.
\begin{itemize}
\item[(i)]  $C_\varphi$ is mean ergodic on $H^\infty(\mathbb{B}_n)$.
\item[(ii)]  $C_\varphi$ is uniformly mean ergodic on $H^\infty(\mathbb{B}_n)$.
\item[(iii)] $\| \varphi_j-a \|_\infty \rightarrow 0$, as $j\rightarrow \infty$.
\item[(iv)]  $C_\varphi$ is quasi-compact on $H^\infty(\mathbb{B}_n)$.
\end{itemize}
\end{theorem}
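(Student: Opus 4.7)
As in the discussion before Lemma \ref{l2}, conjugation by $\varphi_a$ lets us assume $a=0$. The implication (ii)$\Rightarrow$(i) is trivial.

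For (iii)$\Rightarrow$(iv), if $\|\varphi_j\|_\infty\to 0$ then some iterate $\varphi_{n_0}$ maps $\mathbb{B}_n$ into a relatively compact subset of $\mathbb{B}_n$, and a standard normal-families argument makes $C_\varphi^{n_0}=C_{\varphi_{n_0}}$ compact on $H^\infty(\mathbb{B}_n)$, so $\|C_\varphi^{n_0}\|_e=0<1$. For (iv)$\Rightarrow$(ii) and (iv)$\Rightarrow$(iii) I would invoke Proposition \ref{p1}: $C_\varphi$ is power bounded ($\|C_\varphi\|\leq 1$) and the hypothesis $\sigma_p(C_\varphi)\cap\partial\mathbb{D}\subseteq\{1\}$ follows by a short eigenfunction argument using Theorem \ref{t04}---if $C_\varphi f=\lambda f$ with $|\lambda|=1$ and $f\neq 0$, iteration gives $\lambda^j f(z)=f(\varphi_j(z))\to f(0)$ for every $z$; evaluating at $z=0$ forces $\lambda=1$, for otherwise $f(0)=0$ and $|\lambda^j f(z)|=|f(z)|\to 0$ gives $f\equiv 0$. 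Proposition \ref{p1} then yields $C_\varphi^j\to P$ in operator norm, where $Pf=f(0)\cdot\mathbf{1}$. Ces\`aro means of a norm-convergent sequence converge in norm to the same limit, giving (ii), and testing $\|C_\varphi^j-P\|\to 0$ on each coordinate function $z_k$ gives $\|(\varphi_j)_k\|_\infty\to 0$, which summed over $k$ yields (iii).

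The nontrivial implication is (i)$\Rightarrow$(iii). Argue by contradiction: assume $C_\varphi$ is mean ergodic but $\|\varphi_j\|_\infty\not\to 0$. The sequence $\|\varphi_j\|_\infty$ is monotone non-increasing since $\|\varphi_{j+1}\|_\infty=\sup_z|\varphi_j(\varphi(z))|\leq\|\varphi_j\|_\infty$, so there exists $\eta>0$ with $\|\varphi_j\|_\infty\geq\eta$ for every $j$. Pick $w_j\in\mathbb{B}_n$ with $|\varphi_j(w_j)|\geq\eta$. Because $\varphi(0)=0$, the Schwarz--Pick inequality \eqref{e5} forces $i\mapsto|\varphi_i(w_j)|$ to be non-increasing, so the orbit $\{\varphi_i(w_j):0\leq i\leq j\}$ lies entirely in $L(\rho,\eta)=\{|z|\geq\eta\}$ (note that $\rho\equiv 0$ under our spectrum hypothesis, by Theorem \ref{t04}). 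Iterating Lemma \ref{l6} yields $A>1$ with $1-|\varphi_j(w_j)|\geq A^j(1-|w_j|)$, hence $1-|w_j|\leq (1-\eta)A^{-j}$, so the $w_j$ approach $\partial\mathbb{B}_n$ exponentially. Passing to a subsequence $x_l:=w_{j_l}$ satisfying $(1-|x_{l+1}|)/(1-|x_l|)<c<1$, Lemma \ref{l5} produces $\{f_l\}\subset H^\infty(\mathbb{B}_n)$ with $f_l(x_l)=1$, $f_l(x_i)=0$ for $i\neq l$ and $\sum_l|f_l(z)|\leq M$. The plan is to assemble $F=\sum_l\varepsilon_l f_l\in H^\infty$ for a sign sequence $\varepsilon_l\in\{-1,+1\}$ chosen inductively so that the partial averages at successive $x_l$ reinforce rather than cancel, producing a uniform lower bound on $|M_{j_l}(C_\varphi)F(x_l)-F(0)|$. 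Since Lemma \ref{l3} together with (i) would force $M_j(C_\varphi)F\to F(0)\cdot\mathbf{1}$ in $\|\cdot\|_\infty$, this contradicts (i) and establishes (iii).

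The main obstacle is precisely this construction. The interpolation data of Lemma \ref{l5} prescribes the values of $f_m$ at the nodes $x_m$, whereas the orbit points $\varphi_i(x_l)$ generically do not land on any $x_m$, so one must control the numbers $A_{l,m}:=\frac{1}{j_l}\sum_{i=1}^{j_l}f_m(\varphi_i(x_l))$ for all $m$, not just $m=l$. The tools available are the summability $\sum_m|f_m(z)|\leq M$ of Lemma \ref{l5} (which bounds $\sum_m|A_{l,m}-f_m(0)|$ uniformly in $l$) and the dynamical information that the orbit of $x_l$ stays in $L(\rho,\eta)$ with the quantitative decay of $1-|\varphi_i(x_l)|$ coming from iterated applications of Lemma \ref{l6}; a greedy inductive choice of $\varepsilon_l$ must then extract an $\ell$-subsequence on which $|\sum_m\varepsilon_m(A_{l,m}-f_m(0))|$ stays above a fixed positive constant. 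This sign-selection step, tying together the interpolation estimates with the boundary dynamics, is the technical crux of the argument.
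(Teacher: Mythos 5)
Your outer ring of implications is fine and in places genuinely different from the paper: (iii)$\Rightarrow$(iv) by compactness of $C_{\varphi_{n_0}}$ when $\|\varphi_{n_0}\|_\infty<1$, then (iv)$\Rightarrow$(ii) and (iv)$\Rightarrow$(iii) via Proposition \ref{p1} (your eigenfunction argument for $\sigma_p(C_\varphi)\cap\partial\mathbb{D}\subseteq\{1\}$ is essentially the paper's own argument for (ii)$\Leftrightarrow$(iv), and testing $C_\varphi^j\to K_0$ on the coordinate functions to recover (iii) is a nice shortcut; the paper instead proves (iii)$\Rightarrow$(ii) directly through Theorem \ref{t3} and the Bergman-metric estimate). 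The problem is that the whole theorem hinges on (i)$\Rightarrow$(iii), and there your argument is not a proof: you set up the contradiction, extract boundary points $x_l=w_{j_l}$ with the geometric ratio condition, invoke Lemma \ref{l5}, and then explicitly defer the decisive step (the choice of signs $\varepsilon_m$ making $\bigl|\sum_m\varepsilon_m(A_{l,m}-f_m(0))\bigr|$ bounded below). As you yourself note, the orbit points $\varphi_i(x_l)$ are not interpolation nodes, so the numbers $f_m(\varphi_i(x_l))$ and $f_m(0)$ are completely unconstrained by the interpolation data; the summability bound $\sum_m|f_m(z)|\leq M$ only gives upper bounds, and no sign-selection scheme is exhibited (nor is it clear one exists) that produces a uniform lower bound. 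So the key implication is left open and the proof is incomplete.

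The paper closes exactly this gap by a different choice of interpolation nodes: it takes the nodes to \emph{be} the orbit points, namely the triangular array $\varphi_l(a_j)$, $1\leq l\leq j$, ordered as $\varphi(a_1);\ \varphi_2(a_2),\varphi(a_2);\ \varphi_3(a_3),\varphi_2(a_3),\varphi(a_3);\dots$ Within a row the ratio condition of Lemma \ref{l5}(i) comes from Lemma \ref{l6} (your inequality \eqref{e3}), and to pass from row $j$ to row $j+1$ one needs $\|\varphi_{j+1}\|_\infty=1$ so that $|\varphi_{j+1}(a_{j+1})|$ can be taken as close to $1$ as required; the paper proves $\|\varphi_j\|_\infty=1$ for all $j$ when (iii) fails, whereas your monotonicity argument only yields a positive lower bound $\eta$, which suffices for your subsequence extraction but not for this cross-row step. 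With the interpolating family $f_{l,j}$ in hand, the paper sets $f(z)=\sum_{j}\sum_{l\leq j}\langle z,\varphi_l(a_j)\rangle f_{l,j}(z)$: the factor $\langle z,\varphi_l(a_j)\rangle$ simultaneously forces $f(0)=0$ and makes every interpolated value equal to $|\varphi_l(a_j)|^2\geq\varepsilon^2>0$, so $(M_j(C_\varphi)f)(a_j)-f(0)=\frac{1}{j}\sum_{l=1}^j|\varphi_l(a_j)|^2\geq\varepsilon^2$ for every $j$, with no cancellation and hence no sign selection needed. This contradicts the strong convergence $M_j(C_\varphi)f\to f(0)\cdot\mathbf{1}$ coming from Lemma \ref{l3} with $k=1$. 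Unless you adopt this device (or find a genuine replacement for your sign-selection step), your write-up does not establish (i)$\Rightarrow$(iii).
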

\begin{proof}
Without loss of generality, assume $a=0$.  Since $ sp \ d_0 \varphi \subset \mathbb{D}$, it follows from Theorem \ref{t04} that $\varphi_j\rightarrow 0$ uniformly on the compact subsets of $\mathbb{B}_n$. Thus, the holomorphic retraction associated with $\varphi$ is $\rho\equiv 0$.

(iii)$\Rightarrow$ (ii): Since $\rho\equiv 0$ and $\| \varphi_j \|_\infty \rightarrow 0$, we have
\begin{align*}
\sup_{z\in \mathbb{B}_n} \beta(\varphi_{j}(z),\rho(z))&=\sup_{z\in \mathbb{B}_n} \beta(\varphi_{j}(z),0)\\
&=\sup_{z\in \mathbb{B}_n} \dfrac{1}{2} \log \dfrac{1+|\varphi_{j}(z)|}{1-|\varphi_{j}(z)|}\rightarrow 0,
\end{align*}
as $j\rightarrow \infty$. Therefore, (ii) follows from Theorem \ref{t3}.

(ii)$\Rightarrow$ (i) is obvious.

(i)$\Rightarrow$ (iii):
Since $\varphi_j\rightarrow 0$ uniformly on the compact subsets of $\mathbb{B}_n$, the positive integer $k$ obtained in  Lemma \ref{l3} is $1$. Hence,
\begin{equation} \label{e4}
M_j(C_\varphi )\rightarrow  K_{0},
\end{equation}
 for the strong operator topology, where $K_0$ is the operator $K_0 f=f(0)$.
 Let (iii) do not hold.

 We claim that $\|\varphi_{j}\|_\infty=1$ for all $j$. Indeed, if there is some $j_0$ so that $\|\varphi_{j_0}\|<1$, then the closure of $\varphi_{j_0}(\mathbb{B}_n)$ is a compact subset of $\mathbb{B}_n$. Since $\varphi_j\rightarrow 0$ uniformly on the compact subsets of $\mathbb{B}_n$, we have
 $$\varphi_j=\varphi_{j-j_0}\circ \varphi_{j_0}\rightarrow 0$$
uniformly on $\mathbb{B}_n$. This contradicts the assumption that $(iii)$ does not hold. Thus, $\|\varphi_{j}\|_\infty=1$ for all $j$.\\

\textbf{Claim}.  For every $0<\varepsilon<1$, there exists a sequence $\{a_j\}$ in $\mathbb{B}_n$ and some $f$ in $H^\infty(\mathbb{B}_n)$ such that $|\varphi_{j}(a_j)|\geq\varepsilon$ for all $j$ and
$$f(0)=0, \ \ f(\varphi_l(a_j))=|\varphi_l(a_j)|^2,\qquad  \ 1\leq l\leq j, \ \forall j\in \mathbb{N}.$$
   Proof of the claim. Since $\rho\equiv 0$, we have $L(\rho,\varepsilon)=\{z\in \mathbb{B}_n:|z|\geq \varepsilon\}$. Thus, from Lemma \ref{l6}, there is a  constant $0<a<1$ such that
   \begin{equation} \label{e3}
   \dfrac{1-|z|}{1-|\varphi(z)|}< a, \ |z|\geq \varepsilon.
   \end{equation}

    Let $a_1$ in $\mathbb{B}_n$ be such that $|\varphi(a_1)|\geq \varepsilon$.
 Since $\|\varphi_{2}\|_\infty=1$, we can find an $a_2\in \mathbb{B}_n$ such that $|\varphi_{2}(a_2)|$ is large enough so that
$$|\varphi_{2}(a_2)|\geq \varepsilon, \ \ and \ \ \dfrac{1-|\varphi_{2}(a_2)|}{1-|\varphi(a_1)|}<a.$$
By using Schwarz's lemma, $|\varphi(a_2)|\geq |\varphi_2(a_2)|\geq \varepsilon$. Thus, from \ref{e3}, we have
$$\dfrac{1-|\varphi(a_2)|}{1-|\varphi_{2}(a_2)|}<a.$$
We continue this way and construct the following sequence
\begin{equation*}
\begin{matrix}
x_1=\varphi(a_1)\\
x_2=\varphi_{2}(a_2) & x_3=\varphi(a_2) \\
x_4=\varphi_{3}(a_3) & x_5=\varphi_2(a_3) & x_6=\varphi(a_3)\\
\vdots & \vdots & \vdots & \ddots
\end{matrix},
\end{equation*}
which satisfies condition (i) of Lemma \ref{l5}. Therefore, there is some positive constant $M$ and a sequence $\{f_{l,j}\}_{j,l=1}^{\infty,j}$ in $H^\infty(\mathbb{B}_n)$ such that
\begin{itemize}
\item[(a)] $f_{l,j}(\varphi_l(a_j))=1$, and $f_{l,j}(\varphi_r(a_s))=0$ whenever $l\neq r$ or $j\neq s$.
\item[(b)] $\sum_{j=1}^{\infty}\sum_{l=1}^{j} |f_{l,j}(z)|\leq M$, for all $z\in \mathbb{B}_n$.
\end{itemize}
We define the function
\begin{equation*}
f(z)=\sum_{j=1}^{\infty}\sum_{l=1}^{j}  \langle z,\varphi_l(a_j)\rangle f_{l,j}(z).
\end{equation*}
Hence, $f\in H^\infty(\mathbb{B}_n)$, $f(0)=0$, and
$$f(\varphi_l(a_j))=|\varphi_l(a_j)|^2, \ \ 1\leq j<\infty, \ 1\leq l\leq j.$$
 Therefore, the proof of the claim is complete.\\

Using Schwarz's lemma,
$$|\varphi_l(a_j)|^2 \geq|\varphi_{j}(a_j)|^2  \geq \varepsilon^2,$$
for all $j\geq 1$ and $0\leq l\leq j$. Therefore, it follows that
\begin{eqnarray*}
\|\dfrac{1}{j} \sum_{l=1}^{j} C_{\varphi_l} - K_{0} \| &\geq&
\dfrac{1}{\|f\|_\infty} \Big{\|}\dfrac{1}{j} \sum_{l=1}^{j} C_{\varphi_l} f-C_{0}  f \Big{\|}_\infty\\
&\geq&\dfrac{1}{\|f\|_\infty} \Big|\dfrac{1}{j} \sum_{l=1}^{j}  f(\varphi_l(a_j))-f(0)\Big|\\
&=& \dfrac{1}{\|f\|_\infty} . \dfrac{1}{j} \sum_{l=1}^{j} |\varphi_l(a_j)|^2 \geq \dfrac{\varepsilon^2}{\|f\|_\infty} .
\end{eqnarray*}
From this, we conclude that the sequence $\{M_{j}(C_\varphi )\}_{j=1}^\infty$ does not converge to $K_0$ for the strong operator topology which contradicts \ref{e4}. Thus, (iii) holds.\\

 (ii)$\Leftrightarrow$ (iv): It is easy to prove that $\lambda=1$ is an eigenvalue of $C_\varphi$ with constant functions as its associated eigenvectors. Furthermore, since the iterates of $\varphi$ are convergent to $0$, one can see that $\sigma_p(C_\varphi)\cap \partial\mathbb{D}=\{1\}$. Thus, the equivalence of  (ii) and (iv) follows from Proposition \ref{p1}.
\end{proof}

\begin{remark}
With the conditions in Theorem \ref{t2}, the sequence  $\{C_{\varphi_j}\}$ converges to the finite rank operator $K_0$. Hence, we can use Proposition \ref{p1} in Theorem \ref{t2}. However, in Theorem \ref{t3}, the sequence $\{C_{\varphi_{kj}}\}$  converges to $C_\rho$ which is not necessarily a finite rank operator.
\end{remark}

As the final result of this section, we show that if $\varphi$ has no interior fixed point, then  $C_\varphi$ is not uniformly mean ergodic. This argument is a refinement of the proof of \cite[Theorem $3.6$]{beltran}.

\begin{theorem} \label{t4}
Let the holomorphic function $\varphi:\mathbb{B}_n\rightarrow \mathbb{B}_n$ have no interior fixed point. Then, $C_\varphi$ is not uniformly mean ergodic on $H^\infty(\mathbb{B}_n)$.
\end{theorem}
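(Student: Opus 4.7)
I would argue by contradiction: assume $C_\varphi$ is uniformly mean ergodic on $H^\infty(\mathbb{B}_n)$. Then $\{M_j(C_\varphi)\}$ converges in operator norm and is in particular Cauchy, so $\|M_j(C_\varphi) - M_{2j}(C_\varphi)\|\to 0$ as $j \to \infty$. By Theorem \ref{t05}, $\varphi_k$ converges uniformly on compact subsets of $\mathbb{B}_n$ to a boundary point $\zeta\in\partial\mathbb{B}_n$, so $|\varphi_k(z)|\to 1$ for every $z\in\mathbb{B}_n$. Moreover $\|\varphi_k\|_\infty\to 1$ as $k\to\infty$: indeed, if $\|\varphi_{k_0}\|_\infty<1$ for some $k_0$, then $\varphi_{k_0}(\mathbb{B}_n)$ is relatively compact in $\mathbb{B}_n$, and composing with $\varphi_m\to\zeta$ uniformly on this compact set forces $\varphi_k\to\zeta$ uniformly on $\mathbb{B}_n$; if no such $k_0$ exists then $\|\varphi_k\|_\infty=1$ for every $k$.

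The key step is to construct, for arbitrarily large $j$, a point $a_j\in\mathbb{B}_n$ and a function $f_j\in H^\infty(\mathbb{B}_n)$ with $\|f_j\|_\infty\leq M$, where $M$ is a constant independent of $j$, such that
\[
f_j(\varphi_l(a_j))=0 \text{ for } 1\leq l\leq j,\qquad f_j(\varphi_l(a_j))=1 \text{ for } j+1\leq l\leq 2j.
\]
Evaluating at $a_j$ then yields $(M_j(C_\varphi)f_j)(a_j)=0$ and $(M_{2j}(C_\varphi)f_j)(a_j)=1/2$, whence
\[
\|M_j(C_\varphi)-M_{2j}(C_\varphi)\|\geq \frac{1}{2M},
\]
contradicting the Cauchy property. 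To produce such an $f_j$, I would apply Lemma \ref{l5} to the $2j$ points $\{\varphi_l(a_j)\}_{l=1}^{2j}$, obtaining interpolating functions $\{f_{l,j}\}$ with $f_{l,j}(\varphi_l(a_j))=1$, $f_{l,j}(\varphi_r(a_j))=0$ for $r\neq l$, and $\sum_l |f_{l,j}(z)|\leq M$; then $f_j:=\sum_{l=j+1}^{2j}f_{l,j}$ has the required properties.

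It remains to choose $a_j$ so that $\{\varphi_l(a_j)\}_{l=1}^{2j}$ satisfies condition (i) of Lemma \ref{l5}, i.e.\ there exists $a\in(0,1)$ independent of $j$ with
\[
\frac{1-|\varphi_{l+1}(a_j)|}{1-|\varphi_l(a_j)|}<a,\qquad 1\leq l\leq 2j-1.
\]
Following the inductive scheme used in the proof of the implication (i)$\Rightarrow$(iii) of Theorem \ref{t2}, I would pick $a_j$ with $|\varphi_{2j}(a_j)|$ sufficiently close to $1$ (possible since $\|\varphi_{2j}\|_\infty\to 1$) and then propagate the ratio bound along the orbit. The essential new ingredient is a replacement for Lemma \ref{l6}, which is unavailable here because $\varphi$ has no interior fixed point: its role is played by the Julia--Wolff--Carath\'eodory estimate at the boundary Denjoy--Wolff point $\zeta$, which controls $\varphi$ on horoballs at $\zeta$ and yields the required one-step contraction. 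Verifying this estimate and iterating it uniformly along the orbit — particularly when the Wolff dilation at $\zeta$ equals $1$ (the parabolic case) — is the main technical obstacle and constitutes the several-variable refinement of \cite[Theorem 3.6]{beltran}.
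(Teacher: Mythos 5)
Your proof is not complete, and the missing step is exactly the crux. The heart of your scheme --- producing, for every large $j$, a point $a_j$ whose first $2j$ orbit points satisfy condition (i) of Lemma \ref{l5} with a constant $a<1$ independent of $j$ --- is the step you defer to an unproved Julia--Wolff--Carath\'eodory argument, and it is not merely technical: it fails in general. Without an interior fixed point Lemma \ref{l6} is unavailable, and Julia-type estimates at the Denjoy--Wolff point $\zeta$ give a one-step contraction of $1-|\varphi(z)|$ only when the dilation coefficient there is strictly less than $1$. In the parabolic case (dilation equal to $1$) the ratios $\bigl(1-|\varphi_{l+1}(z)|\bigr)/\bigl(1-|\varphi_l(z)|\bigr)$ tend to $1$ along orbits: for instance, for a parabolic automorphism of $\mathbb{D}$ transferred to the half-plane as $w\mapsto w+i$, the orbit of $x>0$ satisfies $1-|\varphi_l|\asymp x/(x^2+l^2)$, so no uniform $a<1$ can bound $2j$ consecutive ratios once $j$ is large, no matter how the starting point $a_j$ is chosen. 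Hence condition (i) of Lemma \ref{l5} cannot be arranged along long orbit segments, and the interpolation-based route you outline cannot be completed as described; since you yourself flag this as the ``main technical obstacle,'' the proposal has a genuine gap precisely there.

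For comparison, the paper's proof sidesteps interpolation and boundary derivative estimates entirely. By Theorem \ref{t05} the iterates converge to a point $z_0\in\partial\mathbb{B}_n$ uniformly on compact sets, so any norm limit of $M_j(C_\varphi)$ must act as $f\mapsto f(z_0)$ on functions of the ball algebra $A(\mathbb{B}_n)$. One then takes the peak-type function $g(z)=\langle (z_0+z)/2,z_0\rangle$, which satisfies $g(z_0)=1$ and $|g|<1$ on $\mathbb{B}_n$, and for each $j$ a power $g_j=g^{k_j}$ chosen so that $|g_j|<1/2$ on the finite set $\{\varphi_l(0)\}_{l=1}^j$, which stays away from $z_0$. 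These $g_j$ are uniformly bounded and give $\bigl|(M_j(C_\varphi)g_j)(0)-g_j(z_0)\bigr|\ge 1/2$, hence $\|M_j(C_\varphi)-K_{z_0}\|\ge 1/2$ for every $j$, contradicting uniform mean ergodicity. This requires no control on the rate at which orbits approach the boundary and in particular covers the parabolic case; if you want to repair your argument, replacing the interpolation step by such a peak-function estimate is the natural fix.
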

\begin{proof}
From Theorem \ref{t05}, there is a $z_0 \in \partial \mathbb{B}_n$ such that the iterates of $\varphi$ converge to $z_0$ uniformly on the compact subsets of $\mathbb{B}_n$. Hence, $M_j(C_\varphi)\rightarrow K_{z_0}$ on
$$A(\mathbb{B}_n)=H(\mathbb{B}_n)\cap \{f:\overline{\mathbb{B}_n}\rightarrow \mathbb{C}, \ continuous\},$$
where $K_{z_0}(f)=f(z_0)$ on $A(\mathbb{B}_n)$.
Consider $g(z)=\langle \frac{z_0+z}{2}, z_0\rangle\in A(\mathbb{B}_n)$. Then, $g(z_0)=1$ and $|g(z)|<1$ for all $z\in \mathbb{B}_n$. Fix $j\in \mathbb{N}$ and take $r_j>0$ so that
$$\{\varphi_l(0)\}_{l=1}^j\cap \{z: \ |z-z_0|<r_j\}=\emptyset.$$
We know that
$$s_j=\sup \{|g(z)|; \ z\in \mathbb{B}_n \setminus \{z: \ |z-z_0|<r_j\}\}<1.$$
Let $k_j\in \mathbb{N}$ be such that $s_j^{k_j}<1/2$ and consider the functions $g_j=g^{k_j}$. Note that $\{g_j\}$ is a bounded sequence in $A(\mathbb{B}_n)$ and $g_j(z_0)=1$ for every $j$. Hence, we have
$$\|\dfrac{1}{j} \sum_{l=1}^{j} C_{\varphi_l} - K_{z_0} \|_{A(\mathbb{B}_n)} \geq \dfrac{1}{\|g_j\|_\infty} \Big|g_j(z_0)-\dfrac{1}{j} \sum_{l=1}^j  g_j(\varphi_l(0))\Big|\geq \dfrac{1}{2 \sup_{j\in \mathbb{N}} \|g_j\|_\infty}, \ \ \forall j.$$
Therefore, $C_\varphi$ is not uniformly mean ergodic on $H^\infty(\mathbb{B}_n)$.
\end{proof}

We add the following questions for the future investigations of mean ergodic composition operators:
\begin{question}
Does the inverse of Theorem \ref{t3} hold?
\end{question}

\begin{question}
Is it true that composition operators, induced by holomorphic functions that  have no interior fixed points, are not mean ergodic (extending Theorem \ref{t4})?
\end{question}

\section{\textbf{Example}}

Let $A$ be an $n\times n$-complex matrix with not necessarily distinct eigenvalues $\{\lambda_1,...,\lambda_n\}$ and  singular values $\delta_1\geq ...\geq \delta_n$. It is well-known that $A(0)=0$, $d_0 A=A$, and
$$\sup_{z\in\mathbb{B}_n} |Az|=\delta_1.$$
If we consider $A$ as a holomorphic function on $\mathbb{B}_n$, then $A$ acts from $\mathbb{B}_n$ to $\mathbb{B}_n$ if and only if $\delta_1\leq 1$. Moreover, $\|A\|_\infty =\delta_1$.

 If $|\lambda_i|<1$ for all  $1\leq i\leq n$, then it follows from \cite[Page 617]{meyer} that $\|A^j\|_\infty\rightarrow 0$ as $j\rightarrow \infty$.
Hence, Theorem \ref{t2} implies that $C_A$ is uniformly mean ergodic.

Now, assume
$$ sp \ d_0 A =sp \ A \subset \mathbb{D} \cup \{\lambda \in \partial \mathbb{D}; \ \lambda \ is \ a \ root \ of \ 1 \}.$$
Thus, there is some positive integer $k$ such that
 $$sp A^k \subset \mathbb{D} \cup \{ 1 \}.$$
Again, \cite[Page 630]{meyer} implies that $A^{kj}$ converges. Let $V$ be an invertible matrix so that $V^{-1} A V$ is the Jordan canonical form of $A$. It is easy to show that the holomorphic retraction associated with $V^{-1}  A^k V$ is $V^{-1} \rho V=0_s \oplus I_{n-s}$ and
\begin{equation*}
V^{-1} A^{kj} V=
\begin{bmatrix}
\lambda_1^{kj} & ? & \cdots & ? \\
 0 & \lambda_2^{kj} & \cdots & ?\\
 \vdots &  & \ddots & \vdots \\
 0 & 0 & \cdots & \lambda_s^{kj}\\
\end{bmatrix}
\oplus I_{n-s}=B^{kj}\oplus I_{n-s},
\end{equation*}
where $0_{s}$ is the $s\times s$-zero matrix, $I_{n-s}$ is the $(n-s)\times (n-s)$-identity matrix, $\lambda_i\in \mathbb{D}$ for $1\leq i\leq s$, and
\begin{equation*}
B=
\begin{bmatrix}
\lambda_1 & ? & \cdots & ? \\
 0 & \lambda_2 & \cdots & ?\\
 \vdots &  & \ddots & \vdots \\
 0 & 0 & \cdots & \lambda_s\\
\end{bmatrix}
\end{equation*}
Again, from \cite[Page 617]{meyer}, we have $\|B^j\|_\infty\rightarrow 0$ as $j\rightarrow \infty$.

Let $X$ be a taut manifold and $k_X$ be the Kobayashi metric on $X$. For the definition of Kobayashi metric see \cite[Page 158]{abate}. From \cite[Corollary 2.3.6]{abate}, the Kobayashi metric and the Bergman metric on $\mathbb{B}_n$ are coincide.

\begin{lemma} \label{l7}
Let $X$ be a taut manifold in $\mathbb{C}^n$ and $z,w\in X$ so that $z=(z_1,...,z_n)$ and $w=(0,...,0,z_{s+1},...,z_n)$. Then,
$$k_X(z,w)\leq \omega(|(z_1,...,z_s)|,0),$$
where $\omega$ is the Poincar\'{e} metric on $\mathbb{D}$.
\end{lemma}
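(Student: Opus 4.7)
The plan is to exhibit a single holomorphic disk $\varphi : \mathbb{D}\to X$ that joins $w$ to $z$ with Poincar\'{e} distance between preimages equal to $\omega(|(z_1,\dots,z_s)|,0)$. Since $k_X(z,w)$ is the largest pseudodistance making every holomorphic map $\mathbb{D}\to X$ distance-decreasing, producing even one such disk yields the desired upper bound.

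Set $u:=(z_1,\dots,z_s,0,\dots,0)\in\mathbb{C}^n$, so that $|u|=|(z_1,\dots,z_s)|$. If $|u|=0$ then $z=w$ and the claim is trivial, so assume $|u|>0$. The natural candidate is the affine-linear disk
\[
\varphi(\lambda):=w+\frac{\lambda}{|u|}\,u,\qquad \lambda\in\mathbb{D},
\]
which is obviously holomorphic on $\mathbb{D}$ and satisfies $\varphi(0)=w$ and $\varphi(|u|)=w+u=z$. Once the containment $\varphi(\mathbb{D})\subset X$ is verified, the Kobayashi contraction applied to the holomorphic map $\varphi:\mathbb{D}\to X$ gives
\[
k_X(z,w)=k_X\bigl(\varphi(|u|),\varphi(0)\bigr)\leq k_{\mathbb{D}}(|u|,0)=\omega(|u|,0),
\]
which is the stated inequality.

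The main obstacle is precisely the containment $\varphi(\mathbb{D})\subset X$: the affine complex line through $w$ in the direction $u/|u|$ must remain inside $X$ for every $\lambda\in\mathbb{D}$. This is where the structure of $X$ as a taut manifold in $\mathbb{C}^n$ arising from the preceding Jordan-form splitting must enter, since for an arbitrary taut domain the linear disk can escape. I would carry out this verification by a direct computation reflecting how $X$ is embedded in $\mathbb{C}^n$: the point is that freezing the last $n-s$ coordinates at $(z_{s+1},\dots,z_n)$ and letting the first $s$ scale by a factor $\lambda\in\mathbb{D}$ produces a family of points that stays inside $X$ because $z\in X$ already witnesses $\lambda=|u|$ and the origin of the first $s$ coordinates (i.e.\ $w$) also lies in $X$. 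Granted this geometric check, the proof is complete.
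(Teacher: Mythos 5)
Your proof is essentially the paper's: it uses exactly the same affine disk $\varphi(\zeta)=\zeta\,(z_1,\dots,z_s,0,\dots,0)/|(z_1,\dots,z_s)|+(0,\dots,0,z_{s+1},\dots,z_n)$, notes $\varphi(0)=w$ and $\varphi(|(z_1,\dots,z_s)|)=z$, and concludes via the distance-decreasing property of the Kobayashi metric. The containment $\varphi(\mathbb{D})\subset X$ that you flag (and which your remark that $z$ and $w$ lie in $X$ does not by itself establish) is likewise taken for granted in the paper, whose proof simply declares $\varphi:\mathbb{D}\to X$; the check is genuinely needed only for the specific domain $X=V^{-1}(\mathbb{B}_n)$ where the lemma is later applied.
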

\begin{proof}
Let $\varphi:\mathbb{D}\rightarrow X$ be defined as
$$\varphi(\zeta)=\zeta \dfrac{(z_1,...,z_s,0,...,0)}{|(z_1,...,z_s)|}+(0,...,0,z_{s+1},...,z_n).$$
Then, $\varphi(0)=0$ and $\varphi(|(z_1,...,z_s)|)=z$. This gives the desired result.
\end{proof}

If $k$ is the Kobayashi metric on $V^{-1}(\mathbb{B}_n)$, then from \cite[Proposition 2.3.1]{abate} and Lemma \ref{l7}, we obtain
\begin{align*}
\sup_{z\in \mathbb{B}_n}\beta(A^{kj}(.),\rho (.))&=\sup_{z\in \mathbb{B}_n}k_{\mathbb{B}_n}(A^{kj}(.),\rho (.))=\sup_{z\in V^{-1} \mathbb{B}_n}k_{\mathbb{B}_n}(A^{kj}V(.),\rho V (.))\\
&\leq\sup_{z\in V^{-1} \mathbb{B}_n} \kappa_{V^{-1}\mathbb{B}_n} (V^{-1} A^{kj} V(.),V^{-1}\rho V(.))\\
&\leq \sup_{z\in V^{-1} \mathbb{B}_n} \omega(|B^{kj}\oplus 0_{n-s}(z)|,0)\\
&=\dfrac{1}{2} \sup_{z\in V^{-1} \mathbb{B}_n} \tanh^{-1} (|B^{kj}\oplus 0_{n-s}(z)|)\rightarrow 0.
\end{align*}
as $j\rightarrow\infty$. Thus, from Theorem \ref{t3}, $C_A$ is uniformly mean ergodic.
Therefore, if
$$sp \ A \subset \mathbb{D} \cup \{\lambda \in \partial \mathbb{D}; \ \lambda \ is \ a \ root \ of \ 1 \},$$
 then $C_A$ is (uniformly) mean ergodic. The converse is obtained from Lemma \ref{l2}. The results of this section are given in the following example:

\begin{example} \label{ex1}
Let $A$ be an $n\times n$-complex matrix with $\|A\|_\infty\leq 1$. Then, the following statements are equivalent.
\begin{itemize}
\item[(i)]  $C_A$ is mean ergodic on $H^\infty(\mathbb{B}_n)$.
\item[(ii)]  $C_A$ is uniformly mean ergodic on $H^\infty(\mathbb{B}_n)$.
\item[(iii)] $ sp \ A \subset \mathbb{D} \cup \{\lambda \in \partial \mathbb{D}; \ \lambda \ is \ a \ root \ of \ 1 \}.$
\end{itemize}
\end{example}

\vspace*{5mm}

\textbf{Acknowledgments}. This work was supported by the Iran National Science Foundation (INSF) [project number: 4000186]. The author would like to thank professor E. Jordá for his invaluable advice, careful reading of the paper, and all his help which highly improved the paper. Finally, I am grateful to the referee for her/his helpful comments.

\vspace*{0.5cm}

 Hamzeh Keshavarzi

E-mail: Hamzehkeshavarzi67@gmail.com

Department of Mathematics, College of Sciences,
Shiraz University, Shiraz, Iran

\end{document}